\theoremstyle{theorem}
\newtheorem{theorem}{Theorem}
\newtheorem{lemma}[theorem]{Lemma}
\newtheorem{note}[theorem]{Note}
\newtheorem{definition}[theorem]{Definition}
\theoremstyle{definition}
\newtheorem{remark}[theorem]{Remark}
\newtheorem{example}[theorem]{Example}
\renewenvironment{proof}[1][\proofname]{\noindent \textbf{Proof. }}
{%
  \qed\endtrivlist
}
\begin{document}

  \author{G.~Horosh, N.~Malyutina, A.~Scerbacova, V.~Shcherbacov}

\title{Units in generalized derivatives of quasigroups}

\maketitle

\begin{abstract}
We proceed  the research of generalized quasigroup derivatives started in early papers of the last co-author
(\cite[p. 212]{2017_Scerb}, \cite{Krapez_Scerb}). For any quasigroup there exist 648 generalized derivatives. Here we study the problem  about   existence of units (left, right, middle) in quasigroups that are a generalized derivative of a quasigroup. There exist 1944 various cases. For every  case we find a proof or a counterexample, often  using Prover or Mace \cite{MAC_CUNE_PROV, MAC_CUNE_MACE}.

\medskip

\noindent \textbf{2000 Mathematics Subject Classification:} 20N05

\medskip

\noindent \textbf{Key words and phrases:} quasigroup, quasigroup derivative, generalized quasigroup derivative, right unit, left unit, middle unit.
\end{abstract}

\section{Introduction}

In this paper  we further  extend Belousov's concept  of derivatives \cite{VD, Krapez_Scerb, Krapez_19}. From the early works of V.D. Belousov it follows that this concept  is closely related to the  Belousov's  Problem \# 18. This Problem  has the following formulation \lq\lq How to recognize identities which
force quasigroups satisfying them to be loops? \rq\rq \, \cite[Problem \#18, page 217]{VD}. Notice, Belousov writes that this Problem was proposed by I.E. Burmistrovich.

In \cite{Krapez_Scerb}  Belousov's  problem is formulated in more general form \lq\lq How to recognize identities which force quasigroups satisfying them to have left, right, middle unit?\rq\rq.

This paper prolongs and extends  researches started in \cite[p. 212]{2017_Scerb}, \cite{Krapez_Scerb}.

Basic concepts can be found in \cite{VD, HOP, 2017_Scerb}. We have used very actively computer tools Prover 9 and Mace 4 \cite{MAC_CUNE_PROV, MAC_CUNE_MACE}, which were developed by Professor W. McCune that left our world early.

\subsection{Quasigroup}

Garrett Birkhoff  \cite{BIRKHOFF_1948, BIRKHOFF} has  defined an equational quasigroup as an algebra with three binary operations $(Q, \cdot, \slash, \backslash)$ that satisfies  the following six identities:
\begin{equation}
x\cdot(x \backslash y) = y, \label{(1)}
\end{equation}
\begin{equation}
(y / x)\cdot x = y, \label{(2)}
\end{equation}
\begin{equation}
x\backslash (x \cdot y) = y, \label{(3)}
\end{equation}
\begin{equation}
(y \cdot x)/ x = y, \label{(4)}
\end{equation}
\begin{equation}
x/(y\backslash x) = y,  \label{T}
\end{equation}
\begin{equation}
(x/y)\backslash x = y. \label{R}
\end{equation}

\begin{remark}
In \cite{JDH_2007} the identities (\ref{(1)})--(\ref{(4)}) are called  (SL), (SR), (IL), (IR), respectively, since these identities guarantee that the left (L)  and right (R) translations of an algebra $(Q, \cdot, \slash, \backslash)$ relative to the operation \lq\lq $\cdot$" are surjective (S) or injective (I) mappings of the set $Q$.

Following this  logic we can denote  identity (\ref{T}) by  (SP) and identity (\ref{R}) by (IP) since these identities guarantee that middle translations (P) are respectively surjective and injective mappings relative to the operation "$\cdot$" \cite{SCERB_07}. Indeed, using Table \ref{Table_0}, we see that $L^{/}_x = P^{-1}_x$.
\end{remark}

The following lemma is well known:

\begin{lemma}\label{main_four_ident_lemma}
In algebra  $(Q, \cdot, \backslash, /)$ with  identities  (\ref{(1)})--(\ref{(4)}),  identities (\ref{T}) and (\ref{R}) are true   \cite{JDH_2007, SCERB_03, SCERB_07}.
\end{lemma}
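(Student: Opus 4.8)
The plan is to derive each of the two target identities by a single substitution into one of the four hypotheses, exploiting the fact that (\ref{T}) pairs naturally with (\ref{(1)}) and (\ref{(4)}), while (\ref{R}) pairs with (\ref{(2)}) and (\ref{(3)}). In both cases the strategy is the same: recognize a compound subterm of the target as a \lq\lq create-then-cancel'' pattern, use one cancellation law to rewrite it, and read off the result.

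First I would establish (\ref{T}), namely $x/(y\backslash x)=y$. The key observation is that the inner argument $y\backslash x$ is exactly the quantity that, by (\ref{(1)}), satisfies $y\cdot(y\backslash x)=x$. Reading (\ref{(4)}) in the form $(a\cdot b)/b=a$ with $a=y$ and $b=y\backslash x$ gives $(y\cdot(y\backslash x))/(y\backslash x)=y$. Substituting $y\cdot(y\backslash x)=x$ from (\ref{(1)}) into the outer division collapses the left-hand side to $x/(y\backslash x)$, which yields (\ref{T}).

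Next I would establish (\ref{R}), namely $(x/y)\backslash x=y$, by the mirror argument. By (\ref{(2)}) the element $x/y$ satisfies $(x/y)\cdot y=x$. Reading (\ref{(3)}) in the form $a\backslash(a\cdot b)=b$ with $a=x/y$ and $b=y$ gives $(x/y)\backslash((x/y)\cdot y)=y$. Substituting $(x/y)\cdot y=x$ from (\ref{(2)}) into the inner product gives $(x/y)\backslash x=y$, which is (\ref{R}).

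I expect no serious obstacle here: the entire content lies in matching the subterms of (\ref{T}) and (\ref{R}) to the correct cancellation identities and performing the substitution in the right order. The only point requiring care is the \emph{direction} of substitution — one must replace the compound term ($y\cdot(y\backslash x)$, respectively $(x/y)\cdot y$) by the single variable $x$ \emph{after} applying the cancellation law rather than before — but once the pairing of (\ref{T}) with (\ref{(1)}),(\ref{(4)}) and of (\ref{R}) with (\ref{(2)}),(\ref{(3)}) is noticed, both derivations are immediate.
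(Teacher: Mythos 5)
Your proof is correct. The paper itself contains no proof of this lemma: it is introduced as ``well known'' and supported only by citations, so there is no internal argument to compare against. Your derivation --- obtaining (\ref{T}) by substituting $y\backslash x$ for the cancelled variable in (\ref{(4)}) and then rewriting $y\cdot(y\backslash x)$ as $x$ via (\ref{(1)}), and obtaining (\ref{R}) by the mirror-image use of (\ref{(3)}) and (\ref{(2)}) --- is exactly the standard two-substitution argument found in the sources the paper cites, and each step is a legitimate instance of the universally quantified identities, so the lemma is fully established.
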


Therefore  the following Evans  equational definition of a quasigroup is usually used  \cite{EVANS_49}.
\begin{definition}  \label{EQUT_QUAS_DEF} \cite{BIRKHOFF_ENG, BIRKHOFF, EVANS_49}.
 A groupoid $(Q, \cdot)$ is called a quasigroup if, on the set $Q$, there exist
operations \lq\lq $\backslash$" and \lq\lq $/$" such that in the algebra $(Q, \cdot, \backslash, /)$
identities  (\ref{(1)})--(\ref{(4)}) are fulfilled. \index{quasigroup!definition!equational}
\end{definition}

\subsection{Parastrophes}

Here   we mainly use \cite{2017_Scerb}.

\begin{definition} \label{def2} An $n$-ary groupoid $(Q, A)$ with $n$-ary operation $A$ such
that in the equality $A(x_1, x_2, \dots, x_n) = x_{n+1}$ the fact of knowing  any $n$   elements of the set   $\{x_1, x_2, \dots,$
$ x_n, x_{n+1}\}$  uniquely specifies the remaining one element, is called an $n$-ary quasigroup  \cite{2}. \index{quasigroup!$n$-ary}
\end{definition}

 If we put  $n=2$, then  we obtain one more definition of a binary quasigroup.

\begin{definition} \label{def2_2_2} \index{quasigroup!parastrophe} \index{parastrophy}
From Definition \ref{def2}  it  follows that  with a given binary quasigroup $(Q, A)$   it is possible to associate
$(3!-1)$ other so-called parastrophes of quasigroup $(Q, A)$:
\[
\begin{array}{ll}
1. & A(x_1, x_2) = x_3 \Longleftrightarrow \\
2. &  A^{(12)}(x_2, x_1) = x_3 \Longleftrightarrow \\
3. & {A}^{(13)}(x_3, x_2) = x_1 \Longleftrightarrow \\
4. & {A}^{(23)}(x_1, x_3) = x_2 \Longleftrightarrow \\
5. & {A}^{(123)}(x_2, x_3) = x_1 \Longleftrightarrow \\
6. & {A}^{(132)}(x_3, x_1) = x_2
\end{array}
\]
\cite[p. 230]{STEIN}, \cite[p. 18]{VD}.
\end{definition}
\begin{note} \label{Note_1_6}
Notice, Cases 5 and 6 are \lq\lq $(12)$-parastrophes" of Cases 3 and 4, respectively.
\end{note}

\subsection{Translations}

The following table (using Table \ref{Table_0})  shows for each kind of translation the equivalent one in each of the (six) parastrophes of a
quasigroup  $(Q,\cdot)$. In fact, Table \ref{Table_0} is a rewritten form of results on three kinds of translations from \cite{BELAS}. See also
\cite{DUPLAK, SCERB_07}.

\begin{table}[h!]
\caption{Translations of quasigroup parastrophes.}
\label{Table_0}
\begin{center}
\begin{tabular}{|c||c| c| c| c| c| c|}
\hline
  & $\varepsilon$  & $(12)$ & $(13)$ & $(23)$ & $(123)$ & $(132)$ \\
\hline\hline
$R$  & $R$ & $L$ & $R^{-1}$ & P & $P^{-1}$ & $L^{-1}$ \\
\hline
$L$  & $L$ & $R$ & $P^{-1}$ & $L^{-1}$ & $R^{-1}$ & $P$ \\
\hline
$P$  & $P$ & $P^{-1}$ & $L^{-1}$ & $R$ & $L$ & $R^{-1}$ \\
\hline
$R^{-1}$ & $R^{-1}$ & $L^{-1}$ & $R$ & $P^{-1}$ &  $P$ & $L$ \\
\hline
$L^{-1}$  & $L^{-1}$ & $R^{-1}$ & $P$ & $L$ & $R$ & $P^{-1}$ \\
\hline
$P^{-1}$  & $P^{-1}$ & $P$ & $L$ & $R^{-1}$ & $L^{-1}$ & $R$ \\
\hline
\end{tabular}
\end{center}
\end{table}

From Table \ref{Table_0} it follows,  for example, that $R^{(132)} = L^{-1} = L^{(23)} = P^{(13)} =
(R^{-1})^{(12)} = (P^{-1})^{(123)}$.

\subsection{Unit elements}

Suppose we have a quasigroup $(Q, \cdot)$.

\begin{definition}
The fact that an element $f\in Q$ is a left  identity element (left unit)  for quasigroup $(Q, \cdot)$ means
that $f\cdot x = x $ for all $x\in Q$.

The fact that an element $e\in  Q$ is a right identity element (right unit) for quasigroup $(Q,\cdot)$ means
that $x\cdot e = x$  for all $x \in  Q$.

The fact that an element $s\in Q$  is a middle  identity element  (middle unit) for quasigroup $(Q, \cdot)$  means
that $s = x\cdot x$  for all $x \in  Q$.

\end{definition}

\subsection{Classical and almost classical  derivatives}

 We follow \cite{vdb0, vdb00, VD, 1a, HOP, 2017_Scerb}. The main idea belongs to V.D. Belousov. See also more earlier  articles of D.G.~Murdoch and A.K.~Suschkewitsch \cite{MURD_39, SUSHKEV_BOOK}. It is clear that identity of associativity is not true in any quasigroup. But we can replace it with  the following equality which is true in any  quasigroup $(Q, A)$:
\begin{equation}\label{Derived_Operation}
A(A(a, b), c) = A(a, A_a(b, c)),
\end{equation}
where $a, b, c \in Q$,   $A_a$ is some binary operation which depends on  the element $a$. The equality (\ref{Derived_Operation}) can be obtained from the following equation $A(A(a, b), c) = A(a, x)$. In this case the solution of the equation can be denoted as $x = A_a(b, c)$.
\begin{definition} \label{Right_Derived_Operation}
The operation $A_a$, which is defined from the equation $A(A(a, b), c) = A(a, x)$, is called a right derivative operation of the operation $A$ relative to element $a$, i.e., $x = A_a(b, c)$  \cite{VD}.
\end{definition}

Quasigroup derivatives are used in  the study of $G$-loops \cite{VD, 1a}.

\begin{definition}\label{Left_Derived_Operation}
The operation ${}_aA$, which is defined from the equation $A(b, A(c,a)) = A(y, a)$,
 is called a left derivative operation of the operation $A$ relative to element $a$, i.e., $y = {}_aA(b, c)$ \cite{VD}.
\end{definition}
\index{operation!left derivative}
\index{quasigroup!left derivative}

Here we give an isotopical   approach to the concept of quasigroup derivatives  \cite{1a, HOP}. For a quasigroup $(Q, \cdot)$ we can rewrite the equality (\ref{Derived_Operation}) in the following form:
\begin{equation} \label{Left_Nucleus_Isotope}
(a\cdot x) \cdot y = a\cdot (x\circ y),
\end{equation}
where $x, y$ are arbitrary elements of the set $Q$ and  element $a$ is a fixed element of $Q$. The equality (\ref{Left_Nucleus_Isotope}) defines a groupoid $(Q, \circ)$.  Moreover, $(Q, \circ) = (Q, \cdot) (L_a, \varepsilon, L_a)$, i.e., the groupoid $(Q, \circ)$ is an isotope of quasigroup $(Q, \cdot)$ with isotopy $(L_a, \varepsilon, L_a)$. Therefore, the groupoid $(Q, \circ)$ is a quasigroup and it is a \textit{right}  derivative of quasigroup $(Q, \cdot)$ relative to element $a$ as in  Definition  \ref{Right_Derived_Operation}.

Quasigroup  $(Q, \ast) = (Q, \cdot) (\varepsilon, R_a, R_a)$ is a \textit{left} derivative of quasigroup $(Q, \cdot)$ with respect to element $a$ as in  Definition \ref{Left_Derived_Operation} \cite{1a}.

\begin{definition}\cite{2017_Scerb}.
Quasigroup  $(Q, \star) = (Q, \cdot) (R_a, L^{-1}_a, \varepsilon)$ is  called  a \textit{middle} derivative of quasigroup $(Q, \cdot)$ with respect to element $a$.

Quasigroup  $(Q, \diamond) = (Q, \cdot) (R^{-1}_a, L_a, \varepsilon)$ is  called a \textit{middle inverse} derivative of quasigroup $(Q, \cdot)$ with respect to element $a$. \index{quasigroup!derivative!middle inverse}
\end{definition}

\begin{lemma}\label{Identity_El_Derivat}
1. Any right derivative  $(Q, \circ)$ of a quasigroup $(Q, \cdot)$ has  left identity element, i.e., $(Q, \circ)$ is a left loop  \cite{1a}.

2. Any left derivative $(Q, \ast)$ of a quasigroup $(Q, \cdot)$ has  right identity element, i.e., $(Q, \ast)$ is a right loop \cite{1a}.

3. Any middle  derivative $(Q, \star)$ of a quasigroup $(Q, \cdot)$  is a left loop \cite{2017_Scerb}.

4. Any middle inverse derivative $(Q, \diamond)$ of a quasigroup $(Q, \cdot)$  is a right loop \cite{2017_Scerb}.
\end{lemma}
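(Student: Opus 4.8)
The plan is to treat all four parts by a single mechanism: turn each isotopy datum into an explicit formula for the derived operation, read off the candidate unit directly from the fixed element $a$, and verify it using the quasigroup identities (\ref{(1)})--(\ref{(4)}). First I would fix the isotopy convention coming from the right-derivative computation in the excerpt, namely $(Q,\circ)=(Q,\cdot)(\alpha,\beta,\gamma)$ means $x\circ y=\gamma^{-1}(\alpha(x)\cdot\beta(y))$, together with the translation formulas $L_a(x)=a\cdot x$, $R_a(x)=x\cdot a$, $L_a^{-1}(x)=a\backslash x$, $R_a^{-1}(x)=x/ a$. Substituting the four triples $(L_a,\varepsilon,L_a)$, $(\varepsilon,R_a,R_a)$, $(R_a,L_a^{-1},\varepsilon)$, $(R_a^{-1},L_a,\varepsilon)$ yields
\begin{align*}
x\circ y &= a\backslash((a\cdot x)\cdot y), &
x\ast y  &= (x\cdot(y\cdot a))/ a, \\
x\star y &= (x\cdot a)\cdot(a\backslash y), &
x\diamond y &= (x/ a)\cdot(a\cdot y).
\end{align*}

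With these formulas in hand, each one-sided unit is obtained by forcing the relevant translation of $a$ to return $a$. For the right derivative $(Q,\circ)$ I would take $f=a\backslash a$, so that $a\cdot f=a$ and hence $f\circ y=a\backslash(a\cdot y)=y$ by (\ref{(3)}); this gives the left unit, proving part~1. For the left derivative $(Q,\ast)$ I would take $e=a/ a$, so that $e\cdot a=a$ by (\ref{(2)}) and therefore $x\ast e=(x\cdot a)/ a=x$ by (\ref{(4)}); this is the right unit of part~2. For the middle derivative $(Q,\star)$, again $f=a/ a$ gives $f\cdot a=a$ by (\ref{(2)}), so $f\star y=a\cdot(a\backslash y)=y$ by (\ref{(1)}), yielding the left unit of part~3. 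Finally, for the middle inverse derivative $(Q,\diamond)$ I would take $e=a\backslash a$, so that $a\cdot e=a$ by (\ref{(1)}) and hence $x\diamond e=(x/ a)\cdot a=x$ by (\ref{(2)}), the right unit of part~4.

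The mathematics here is routine once the formulas are set up; the genuine obstacle is purely the bookkeeping of the isotopy convention, especially the placement of the inverse translation in the middle and middle-inverse cases, where $L_a^{-1}$ sits on the second factor of $\star$ but $R_a^{-1}$ sits on the first factor of $\diamond$. A sign error there would attach the wrong candidate ($a\backslash a$ versus $a/ a$) to the wrong side, so I would double-check each substitution against the defining equations of Definitions~\ref{Right_Derived_Operation} and~\ref{Left_Derived_Operation} and Table~\ref{Table_0}. No cancellation argument is needed, since in every case the verification is a direct evaluation collapsing via one of (\ref{(1)})--(\ref{(4)}); it therefore also shows that the exhibited element is the unique such one-sided unit, matching the \textit{left loop}/\textit{right loop} conclusions.
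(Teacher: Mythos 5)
Your proposal is correct; note, though, that the paper itself does not prove Lemma \ref{Identity_El_Derivat} --- all four parts are quoted from \cite{1a} and \cite{2017_Scerb} --- so there is no internal proof to compare against. Your argument is consistent with the paper's conventions: the isotopy reading $\gamma(x\circ y)=\alpha(x)\cdot\beta(y)$ is exactly equality (\ref{Left_Nucleus_Isotope}), your four operation formulas match Definitions \ref{Right_Derived_Operation} and \ref{Left_Derived_Operation} and the paper's definition of the middle and middle inverse derivatives, and each verification invokes the appropriate identity among (\ref{(1)})--(\ref{(4)}). The closest analogue inside the paper is its proof of the theorem on isostrophic derivatives, which likewise exhibits a concrete unit but writes it in translation notation (the element denoted $L^{-2}_a$) and manipulates the defining equality of the isotope; your version, naming the units explicitly as $a\backslash a$ and $a/a$, is more self-contained and makes the role of each Birkhoff identity transparent. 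Two minor corrections. First, your closing claim that the evaluation \lq\lq also shows\rq\rq\ uniqueness is off: the evaluation gives existence, while uniqueness of a one-sided unit follows from cancellation in the quasigroup $(Q,\circ)$ (and uniqueness is not needed for the left/right loop conclusion anyway). Second, to conclude \lq\lq left loop\rq\rq\ or \lq\lq right loop\rq\rq\ you should also record that each derivative is itself a quasigroup, which holds because it is an isotope of a quasigroup, as the paper notes just before the lemma.
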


\begin{definition}
 Isotopisms of the form $(L_a, \varepsilon, L_a)$, $(\varepsilon, R_a, R_a)$, $(R^{-1}_a, L_a, \varepsilon)$, $(R_a, L^{-1}_a, \varepsilon)$ can be called  as nuclear isotopisms \cite{2017_Scerb}[Definition 4.12].
\end{definition}
Left  (right, middle) nuclear isotopic image of quasigroup $(Q, \cdot)$ can be named as  right (left, middle)  derivative of a quasigroup $(Q,\cdot)$.
See Chapter \lq\lq A-nuclei of quasigroups\rq\rq\,  about autotopy nuclei in \cite{2017_Scerb}.


\subsection{Isostrophical approach}

Above we developed  isotopical approach to binary derivatives.

Now we change binary isotopical approach to binary isostrophical approach \cite{2017_Scerb}.
Notice, other modification of concept of derivative is given in \cite{Krapez_19}.

By the letter $T$ we denote the set of all quasigroup translations of a fixed quasigroup $(Q, \cdot)$ and their inverses relatively one fixed element, say, relatively  an element $a$.

\begin{definition}
Quasigroup  $(Q, \star) = (Q, \cdot) (\alpha, \beta, \gamma)$, where $(Q, \star)$ is isostrophic image of  quasigroup $(Q, \cdot)$, i.e.,  $\cdot \in \{A, A^{(12)},  {A}^{(13)},  {A}^{(23)}, {A}^{(123)}, {A}^{(132)}\}$,   $\alpha, \beta, \gamma \in T$, and in every case one of the translations $\alpha, \beta, \gamma$ is an identity permutation, is  called  an \textit{isostrophic} (generalized) derivative of quasigroup $(Q, \cdot)$ with respect to element $a$.
\end{definition}

It is clear that any binary quasigroup has $648$ isostrophic derivatives (see Table \ref{TABLE_two_thre}).

\begin{theorem}
\begin{enumerate}
  \item Quasigroup  $(Q, \star) = (Q, \cdot) (L_a, L_a, \varepsilon)$ has left unit element;
  \item   quasigroup $(Q, \circ) = (Q, \ast) (L_a, L_a, \varepsilon)$, where $x\ast y = y\cdot x$ for all $x, y \in Q$,  has right unit element;
  \item  quasigroup $(Q, \bullet) = (Q, \backslash ) (L_a, L^{-1}_a, \varepsilon)$, where $x\backslash y = z$ if and only if $ x\cdot z = y$ for all suitable  $x, y, z \in Q$,  has left unit element.
\end{enumerate}
\end{theorem}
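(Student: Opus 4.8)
The plan is to establish all three items by one uniform direct computation. For each generalized derivative I would first unwind the isostrophe symbol $(Q,\cdot)(\alpha,\beta,\gamma)$ into an explicit binary operation expressed through $\cdot$, $\backslash$ and the translations $L_a,L_a^{-1}$, recalling that the triple acts on the two arguments through the inverse translations $\alpha^{-1},\beta^{-1}$ and on the product through $\gamma$. I would then propose the single candidate unit $f=a\cdot a$ and verify it in each case. The observation unifying all three cases is that the unit equation reduces, after one cancellation, to an equality of left translations $L_{c}=L_a$ with $c=a\backslash f$; since $(Q,\cdot)$ is a quasigroup the assignment $b\mapsto L_b$ is injective (right cancellation, identity (\ref{(4)})), so $c=a$, i.e.\ $a\backslash f=a$, which forces $f=a\cdot a$.

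Concretely, for item 1 the derivative $(Q,\cdot)(L_a,L_a,\varepsilon)$ becomes $x\star y=L_a^{-1}(x)\cdot L_a^{-1}(y)$. Since $f=a\cdot a=L_a(a)$ gives $L_a^{-1}(f)=a$, we get $f\star y=a\cdot L_a^{-1}(y)=L_a\bigl(L_a^{-1}(y)\bigr)=y$, the left-unit condition. Item 2 is the $(12)$-parastrophe of item 1 (Note \ref{Note_1_6}): from $x\ast y=y\cdot x$ the same unwinding yields $x\circ y=L_a^{-1}(y)\cdot L_a^{-1}(x)$, so the identical argument with the two arguments interchanged shows $x\circ e=a\cdot L_a^{-1}(x)=x$ for $e=a\cdot a$, i.e.\ $e$ is a right unit.

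Item 3 is the case that requires care, since the base operation is now the division $\backslash$. Here I would use Table \ref{Table_0} to unwind $(Q,\backslash)(L_a,L_a^{-1},\varepsilon)$ into $x\bullet y=L_a^{-1}(x)\backslash L_a(y)=(a\backslash x)\backslash(a\cdot y)$. Taking $f=a\cdot a$, identity (\ref{(3)}) gives $a\backslash f=a$, hence $f\bullet y=a\backslash(a\cdot y)=y$ by (\ref{(3)}) once more; thus $f=a\cdot a$ is again a left unit.

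The three verifications are one-liners, so the only real obstacle is notational rather than mathematical: unwinding each isostrophe symbol into the correct explicit operation, with every translation placed in the right argument slot and carrying the correct inverse. For item 3 this means composing the division parastrophe with $L_a$ and $L_a^{-1}$ consistently, and Table \ref{Table_0} is precisely the bookkeeping tool for that. Once the operations are written out, the unit $a\cdot a$ is forced by the quasigroup axioms (\ref{(1)})--(\ref{(4)}) alone, with no deeper idea needed.
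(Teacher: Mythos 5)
Your verification is correct as a self-contained argument, but it is not the paper's argument, and the divergence is substantive rather than notational. The paper unwinds each isostrophe with the translations acting \emph{directly} on the arguments --- its equalities (\ref{equality_9}), (\ref{equality_19}), (\ref{equality_29}) read $x\star y=ax\cdot ay$, $x\circ y=ay\cdot ax$, $x\bullet y=ax\backslash(a\backslash y)$ --- and then produces the unit by ``substituting $x=L^{-2}_a$'', i.e.\ by treating the permutation $L_a^{-2}$ as though it were an element of $Q$. That step is legitimate only when $L_a^{-2}$ is realized by an element and translations compose as in a group ($L_{a\cdot f}=L_aL_f$); in a general quasigroup it is not. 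Indeed, under the paper's literal form $x\star y=ax\cdot ay$ a left unit exists if and only if $L_a^{-1}$ is itself a left translation, and this fails, e.g., for $x\cdot y=2x+3y \pmod 5$ with $a=0$, where $x\star y=x+4y$ has no left unit. You instead adopt the opposite (inverse-action) convention, so your operations $(a\backslash x)\cdot(a\backslash y)$, $(a\backslash y)\cdot(a\backslash x)$, $(a\backslash x)\backslash(a\cdot y)$ differ from the paper's, and your unit is the concrete element $a\cdot a$, checked purely from identities (\ref{(1)}) and (\ref{(3)}). Your reading is the one under which the theorem --- and the corresponding ``$+$'' entries of Table \ref{TABLE_two_thre} --- is actually true for every quasigroup, so your computation supplies a sound proof exactly where the paper's formal substitution has a gap; what it costs is that the convention must be declared explicitly, since the paper's own treatment of classical derivatives (deriving $(Q,\circ)=(Q,\cdot)(L_a,\varepsilon,L_a)$ from $(a\cdot x)\cdot y=a\cdot(x\circ y)$) uses the direct convention, under which your formulas, and the statement itself, would not survive. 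In short: correct proof, genuinely different route, and the difference is precisely the point on which the paper's own proof is fragile.
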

\begin{proof}
\begin{enumerate}
\item
We re-write equality $(Q, \star) = (Q, \cdot) (L_a, L_a, \varepsilon)$ in the following form:
\begin{equation} \label{equality_9}
x\star y = ax\cdot ay.
\end{equation}
We substitute $x=L^{-2}_a$ in equality (\ref{equality_9}). We have

\begin{equation} \label{equality_10}
L^{-2}_a\star y = y.
\end{equation}
\item

We re-write equality $(Q, \circ) = (Q, \ast) (L_a, L_a, \varepsilon)$ in the following form:
\begin{equation} \label{equality_19}
x\circ y = ax\ast ay = ay\cdot ax.
\end{equation}
We substitute  $y=L^{-2}_a$ in equality (\ref{equality_19}). We have

\begin{equation} \label{equality_11}
x \circ L^{-2}_a = x.
\end{equation}
\item

We re-write equality $(Q, \bullet) = (Q, \backslash ) (L_a, L^{-1}_a, \varepsilon)$ in the following form:
\begin{equation} \label{equality_29}
x \bullet y = ax \backslash ( a\backslash y).
\end{equation}
From equality (\ref{equality_29}) we have
\begin{equation} \label{equality_112}
ax\cdot  (x\bullet y) = a\backslash y, \qquad a(a(x(x\bullet y))) =y .
\end{equation}
If we substitute  $x=L^{-2}_a$ in equality (\ref{equality_112}),
then we have
\begin{equation} \label{equality_113}
L^{-2}_a\bullet y =y .
\end{equation}
\end{enumerate}
\end{proof}

\begin{example} \label{Example_Gay}
We demonstrate that in general, isostrophical  derivative of the form:
\begin{equation} \label{Equality_134}
x \diamond  y  = L_a x \backslash P^{-1}_a y \: (\textrm {Table 6, row 3})
\end{equation}
 has no left, right, middle unit.
Using table of translations (Table \ref{Table_0}),  we re-write  this isostrophical  derivative  in the following form $ x \diamond  y = (a\cdot x) \backslash (a/ y)$.

Using Table \ref{TABLE_two_thre},  we can say that this isostrophical  derivative $(Q, \diamond)$ has no left (right, middle) identity element for a  fixed element $a\in Q$. Indeed,
we take the following quasigroup $(Q, \cdot)$ (the cyclic group of order 3)

\begin{center}
\begin{tabular}{r|rrr}
$\cdot $ & 0 & 1 & 2\\
\hline
    0 & 0 & 1 & 2 \\
    1 & 1 & 2 & 0 \\
    2 & 2 & 0 & 1
\end{tabular}
\end{center}
 and $a:=0$. Its isostrophical image $(Q, \diamond)$ of the form (\ref{Equality_134})  has no left  and right unit
\begin{center}
\begin{tabular}{r|rrr}
$\diamond$ & 0 & 1 & 2\\
\hline
    0 & 0 & 2 & 1 \\
    1 & 2 & 1 & 0 \\
    2 & 1 & 0 & 2
\end{tabular}
\end{center}

We take the following quasigroup $(Q, \cdot)$
\begin{center}
\begin{tabular}{r|rrr}
$\cdot$ & 0 & 1 & 2\\
\hline
    0 & 1 & 2 & 0 \\
    1 & 0 & 1 & 2 \\
    2 & 2 & 0 & 1
\end{tabular}
\end{center}

 and $a:=0$. Its isostrophical image $(Q, \diamond)$ of the form (\ref{Equality_134})  has no middle   unit
\begin{center}
\begin{tabular}{r|rrr}
$\diamond$ & 0 & 1 & 2\\
\hline
    0 & 1 & 2 & 0 \\
    1 & 2 & 0 & 1 \\
    2 & 0 & 1 & 2
\end{tabular}
\end{center}

\end{example}

\subsection{Table}

In Table \ref{TABLE_two_thre} we research all 1944 cases when a generalized derivative of a quasigroup has a unit.
While  filling  Table \ref{TABLE_two_thre}, we actively used Prover and Mace \cite{MAC_CUNE_PROV,  MAC_CUNE_MACE}.

We denote main quasigroup operation $A$ as $xy$, $yx$ denotes the operation $A^{(12)}$,  symbol \lq\lq $y \slash x $\rq\rq denotes  operation ${A}^{(13)}$,  symbol \lq\lq $x \backslash y $\rq\rq denotes  operation ${A}^{(23)}$,  symbol  \lq\lq $x \slash y $\rq\rq denotes  operation ${A}^{(123)}$,  symbol \lq\lq $y \backslash x $\rq\rq denotes  operation ${A}^{(132)}$.

\begin{table}  [h!] 
\caption{Units in quasigroup that is a  generalized derivative} \label{TABLE_two_thre}
\large{
\begin{center}
\begin{tabular}{c c c }
\begin{tabular}{|c| |c |c |c| }
\hline
$(L_a, L_a, \varepsilon)$  & $f$  & $e$ & $s$ \\
\hline
$xy$  & $+$  & $-$ & $-$ \\
$yx$  & $-$  & $+$ & $-$ \\
$x\backslash y$  & $-$  & $-$ & $-$ \\
$y\backslash x$  & $-$  & $-$ & $-$ \\
$y\slash x$  & $-$  & $-$ & $-$ \\
$x\slash y$  & $-$  & $-$ & $-$ \\
\hline
\end{tabular}
&
\begin{tabular}{|c| |c |c |c| }
\hline
$(L_a, L^{-1}_a, \varepsilon)$  & $f$  & $e$ & $s$ \\
\hline
$xy$  & $-$  & $-$ & $-$ \\
$yx$  & $-$  & $+$ & $-$ \\
$x\backslash y$  & $+$  & $-$ & $-$ \\
$y\backslash x$  & $-$  & $-$ & $-$ \\
$y\slash x$  & $-$  & $-$ & $-$ \\
$x\slash y$  & $-$  & $-$ & $-$ \\
\hline
\end{tabular}
&
\begin{tabular}{|c| |c |c |c| }
\hline
$(L_a, R_a, \varepsilon)$  & $f$  & $e$ & $s$ \\
\hline
$xy$  & $-$  & $-$ & $-$ \\
$yx$  & $+$  & $+$ & $-$ \\
$x\backslash y$  & $-$  & $-$ & $-$ \\
$y\backslash x$  & $-$  & $-$ & $-$ \\
$y\slash x$  & $-$  & $-$ & $-$ \\
$x\slash y$  & $-$  & $-$ & $-$ \\
\hline
\end{tabular}
\end{tabular}\\

\begin{tabular}{c c c }
\begin{tabular}{|c| |c |c |c| }
\hline
$(L_a, R^{-1}_a, \varepsilon)$  & $f$  & $e$ & $s$ \\
\hline
$xy$  & $-$  & $-$ & $-$ \\
$yx$  & $-$  & $+$ & $-$ \\
$x\backslash y$  & $-$  & $-$ & $-$ \\
$y\backslash x$  & $-$  & $-$ & $-$ \\
$y\slash x$  & $+$  & $-$ & $-$ \\
$x\slash y$  & $-$  & $-$ & $-$ \\
\hline
\end{tabular}
&
\begin{tabular}{|c| |c |c |c| }
\hline
$(L_a, P_a, \varepsilon)$  & $f$  & $e$ & $s$ \\
\hline
$xy$  & $-$  & $-$ & $-$ \\
$yx$  & $-$  & $+$ & $-$ \\
$x\backslash y$  & $-$  & $-$ & $-$ \\
$y\backslash x$  & $+$  & $-$ & $-$ \\
$y\slash x$  & $-$  & $-$ & $-$ \\
$x\slash y$  & $-$  & $-$ & $-$ \\
\hline
\end{tabular}
&
\begin{tabular}{|c| |c |c |c| }
\hline
$(L_a, P^{-1}_a, \varepsilon)$  & $f$  & $e$ & $s$ \\
\hline
$xy$  & $-$  & $-$ & $-$ \\
$yx$  & $-$  & $+$ & $-$ \\
$x\backslash y$  & $-$  & $-$ & $-$ \\
$y\backslash x$  & $-$  & $-$ & $-$ \\
$y\slash x$  & $-$  & $-$ & $-$ \\
$x\slash y$  & $+$  & $-$ & $-$ \\
\hline
\end{tabular}
\end{tabular}\\

\begin{tabular}{c c c }
\begin{tabular}{|c| |c |c |c| }
\hline
$(L_a, \varepsilon, L_a)$  & $f$  & $e$ & $s$ \\
\hline
$xy$  & $-$  & $-$ & $-$ \\
$yx$  & $-$  & $-$ & $-$ \\
$x\backslash y$  & $+$  & $-$ & $-$ \\
$y\backslash x$  & $-$  & $-$ & $-$ \\
$y\slash x$  & $-$  & $-$ & $+$ \\
$x\slash y$  & $-$  & $-$ & $-$ \\
\hline
\end{tabular}
&
\begin{tabular}{|c| |c |c |c| }
\hline
$(L_a, \varepsilon,  L^{-1}_a)$  & $f$  & $e$ & $s$ \\
\hline
$xy$  & $+$  & $-$ & $-$ \\
$yx$  & $-$  & $-$ & $-$ \\
$x\backslash y$  & $-$  & $-$ & $-$ \\
$y\backslash x$  & $-$  & $-$ & $-$ \\
$y\slash x$  & $-$  & $-$ & $+$ \\
$x\slash y$  & $-$  & $-$ & $-$ \\
\hline
\end{tabular}
&
\begin{tabular}{|c| |c |c |c| }
\hline
$(L_a, \varepsilon, R_a)$  & $f$  & $e$ & $s$ \\
\hline
$xy$  & $-$  & $-$ & $-$ \\
$yx$  & $-$  & $-$ & $-$ \\
$x\backslash y$  & $-$  & $-$ & $-$ \\
$y\backslash x$  & $-$  & $-$ & $-$ \\
$y\slash x$  & $+$  & $-$ & $+$ \\
$x\slash y$  & $-$  & $-$ & $-$ \\
\hline
\end{tabular}
\end{tabular}

\begin{tabular}{c c c }
\begin{tabular}{|c| |c |c |c| }
\hline
$(L_a, \varepsilon, R^{-1}_a)$  & $f$  & $e$ & $s$ \\
\hline
$xy$  & $-$  & $-$ & $-$ \\
$yx$  & $+$  & $-$ & $-$ \\
$x\backslash y$  & $-$  & $-$ & $-$ \\
$y\backslash x$  & $-$  & $-$ & $-$ \\
$y\slash x$  & $-$  & $-$ & $+$ \\
$x\slash y$  & $-$  & $-$ & $-$ \\
\hline
\end{tabular}
&
\begin{tabular}{|c| |c |c |c| }
\hline
$(L_a, \varepsilon,  P_a)$  & $f$  & $e$ & $s$ \\
\hline
$xy$  & $-$  & $-$ & $-$ \\
$yx$  & $-$  & $-$ & $-$ \\
$x\backslash y$  & $-$  & $-$ & $-$ \\
$y\backslash x$  & $-$  & $-$ & $-$ \\
$y\slash x$  & $-$  & $-$ & $+$ \\
$x\slash y$  & $+$  & $-$ & $-$ \\
\hline
\end{tabular}
&
\begin{tabular}{|c| |c |c |c| }
\hline
$(L_a, \varepsilon, P^{-1}_a)$  & $f$  & $e$ & $s$ \\
\hline
$xy$  & $-$  & $-$ & $-$ \\
$yx$  & $-$  & $-$ & $-$ \\
$x\backslash y$  & $-$  & $-$ & $-$ \\
$y\backslash x$  & $+$  & $-$ & $-$ \\
$y\slash x$  & $-$  & $-$ & $+$ \\
$x\slash y$  & $-$  & $-$ & $-$ \\
\hline
\end{tabular}
\end{tabular}

\end{center}
}
\end{table}

\begin{table}[hptb]
\label{TABLE_thre}
\large{
\begin{center}
\begin{tabular}{c c c }
\begin{tabular}{|c| |c |c |c| }
\hline
$(\varepsilon, L_a, L_a)$  & $f$  & $e$ & $s$ \\
\hline
$xy$  & $-$  & $-$ & $-$ \\
$yx$  & $-$  & $-$ & $-$ \\
$x\backslash y$  & $-$  & $-$ & $-$ \\
$y\backslash x$  & $-$  & $+$ & $-$ \\
$y\slash x$  & $-$  & $-$ & $-$ \\
$x\slash y$  & $-$  & $-$ & $+$ \\
\hline
\end{tabular}
&
\begin{tabular}{|c| |c |c |c| }
\hline
$(\varepsilon, L_a, L^{-1}_a)$  & $f$  & $e$ & $s$ \\
\hline
$xy$  & $-$  & $-$ & $-$ \\
$yx$  & $-$  & $+$ & $-$ \\
$x\backslash y$  & $-$  & $-$ & $-$ \\
$y\backslash x$  & $-$  & $-$ & $-$ \\
$y\slash x$  & $-$  & $-$ & $-$ \\
$x\slash y$  & $-$  & $-$ & $+$ \\
\hline
\end{tabular}
&
\begin{tabular}{|c| |c |c |c| }
\hline
$(\varepsilon, L_a, R_a)$  & $f$  & $e$ & $s$ \\
\hline
$xy$  & $-$  & $-$ & $-$ \\
$yx$  & $-$  & $-$ & $-$ \\
$x\backslash y$  & $-$  & $-$ & $-$ \\
$y\backslash x$  & $-$  & $-$ & $-$ \\
$y\slash x$  & $-$  & $-$ & $-$ \\
$x\slash y$  & $-$  & $+$ & $+$ \\
\hline
\end{tabular}
\end{tabular}\\

\begin{tabular}{c c c }
\begin{tabular}{|c| |c |c |c| }
\hline
$(\varepsilon, L_a, R^{-1}_a)$  & $f$  & $e$ & $s$ \\
\hline
$xy$  & $-$  & $+$ & $-$ \\
$yx$  & $-$  & $-$ & $-$ \\
$x\backslash y$  & $-$  & $-$ & $-$ \\
$y\backslash x$  & $-$  & $-$ & $-$ \\
$y\slash x$  & $-$  & $-$ & $-$ \\
$x\slash y$  & $-$  & $-$ & $+$ \\
\hline
\end{tabular}
&
\begin{tabular}{|c| |c |c |c| }
\hline
$(\varepsilon, L_a, P_a)$  & $f$  & $e$ & $s$ \\
\hline
$xy$  & $-$  & $-$ & $-$ \\
$yx$  & $-$  & $-$ & $-$ \\
$x\backslash y$  & $-$  & $-$ & $-$ \\
$y\backslash x$  & $-$  & $-$ & $-$ \\
$y\slash x$  & $-$  & $+$ & $-$ \\
$x\slash y$  & $-$  & $-$ & $+$ \\
\hline
\end{tabular}
&
\begin{tabular}{|c| |c |c |c| }
\hline
$(\varepsilon, L_a, P^{-1}_a)$  & $f$  & $e$ & $s$ \\
\hline
$xy$  & $-$  & $-$ & $-$ \\
$yx$  & $-$  & $-$ & $-$ \\
$x\backslash y$  & $-$  & $+$ & $-$ \\
$y\backslash x$  & $-$  & $-$ & $-$ \\
$y\slash x$  & $-$  & $-$ & $-$ \\
$x\slash y$  & $-$  & $-$ & $+$ \\
\hline
\end{tabular}
\end{tabular}\\

\begin{tabular}{c c c }
\begin{tabular}{|c| |c |c |c| }
\hline
$(L^{-1}_a, L_a, \varepsilon)$  & $f$  & $e$ & $s$ \\
\hline
$xy$  & $+$  & $-$ & $-$ \\
$yx$  & $-$  & $-$ & $-$ \\
$x\backslash y$  & $-$  & $-$ & $-$ \\
$y\backslash x$  & $-$  & $+$ & $-$ \\
$y\slash x$  & $-$  & $-$ & $-$ \\
$x\slash y$  & $-$  & $-$ & $-$ \\
\hline
\end{tabular}
&
\begin{tabular}{|c| |c |c |c| }
\hline
$(L^{-1}_a,  L^{-1}_a, \varepsilon)$  & $f$  & $e$ & $s$ \\
\hline
$xy$  & $-$  & $-$ & $-$ \\
$yx$  & $-$  & $-$ & $-$ \\
$x\backslash y$  & $+$  & $-$ & $-$ \\
$y\backslash x$  & $-$  & $+$ & $-$ \\
$y\slash x$  & $-$  & $-$ & $-$ \\
$x\slash y$  & $-$  & $-$ & $-$ \\
\hline
\end{tabular}
&
\begin{tabular}{|c| |c |c |c| }
\hline
$(L^{-1}_a, R_a,  \varepsilon)$  & $f$  & $e$ & $s$ \\
\hline
$xy$  & $-$  & $-$ & $-$ \\
$yx$  & $+$  & $-$ & $-$ \\
$x\backslash y$  & $-$  & $-$ & $-$ \\
$y\backslash x$  & $-$  & $+$ & $-$ \\
$y\slash x$  & $-$  & $-$ & $-$ \\
$x\slash y$  & $-$  & $-$ & $-$ \\
\hline
\end{tabular}
\end{tabular}\\

\begin{tabular}{c c c }
\begin{tabular}{|c| |c |c |c| }
\hline
$(L^{-1}_a, R^{-1}_a, \varepsilon)$  & $f$  & $e$ & $s$ \\
\hline
$xy$  & $-$  & $-$ & $-$ \\
$yx$  & $-$  & $-$ & $-$ \\
$x\backslash y$  & $-$  & $-$ & $-$ \\
$y\backslash x$  & $-$  & $+$ & $-$ \\
$y\slash x$  & $+$  & $-$ & $-$ \\
$x\slash y$  & $-$  & $-$ & $-$ \\
\hline
\end{tabular}
&
\begin{tabular}{|c| |c |c |c| }
\hline
$(L^{-1}_a,  P_a, \varepsilon)$  & $f$  & $e$ & $s$ \\
\hline
$xy$  & $-$  & $-$ & $-$ \\
$yx$  & $-$  & $-$ & $-$ \\
$x\backslash y$  & $-$  & $-$ & $-$ \\
$y\backslash x$  & $+$  & $+$ & $-$ \\
$y\slash x$  & $-$  & $-$ & $-$ \\
$x\slash y$  & $-$  & $-$ & $-$ \\
\hline
\end{tabular}
&
\begin{tabular}{|c| |c |c |c| }
\hline
$(L^{-1}_a,  P^{-1}_a, \varepsilon)$  & $f$  & $e$ & $s$ \\
\hline
$xy$  & $-$  & $-$ & $-$ \\
$yx$  & $-$  & $-$ & $-$ \\
$x\backslash y$  & $-$  & $-$ & $-$ \\
$y\backslash x$  & $-$  & $+$ & $-$ \\
$y\slash x$  & $-$  & $-$ & $-$ \\
$x\slash y$  & $+$  & $-$ & $-$ \\
\hline
\end{tabular}
\end{tabular}

\end{center}
}
\end{table}

\begin{table}[hptb]
\label{TABLE_thre}
\large{
\begin{center}
\begin{tabular}{c c c }
\begin{tabular}{|c| |c |c |c| }
\hline
$(L^{-1}_a, \varepsilon,  L_a)$  & $f$  & $e$ & $s$ \\
\hline
$xy$  & $-$  & $-$ & $-$ \\
$yx$  & $-$  & $-$ & $-$ \\
$x\backslash y$  & $+$  & $-$ & $-$ \\
$y\backslash x$  & $-$  & $-$ & $-$ \\
$y\slash x$  & $-$  & $-$ & $-$ \\
$x\slash y$  & $-$  & $-$ & $+$ \\
\hline
\end{tabular}
&
\begin{tabular}{|c| |c |c |c| }
\hline
$(L^{-1}_a, \varepsilon, L^{-1}_a)$  & $f$  & $e$ & $s$ \\
\hline
$xy$  & $+$  & $-$ & $-$ \\
$yx$  & $-$  & $-$ & $-$ \\
$x\backslash y$  & $-$  & $-$ & $-$ \\
$y\backslash x$  & $-$  & $-$ & $-$ \\
$y\slash x$  & $-$  & $-$ & $-$ \\
$x\slash y$  & $-$  & $-$ & $+$ \\
\hline
\end{tabular}
&
\begin{tabular}{|c| |c |c |c| }
\hline
$(L^{-1}_a, \varepsilon, R_a)$  & $f$  & $e$ & $s$ \\
\hline
$xy$  & $-$  & $-$ & $-$ \\
$yx$  & $-$  & $-$ & $-$ \\
$x\backslash y$  & $-$  & $-$ & $-$ \\
$y\backslash x$  & $-$  & $-$ & $-$ \\
$y\slash x$  & $+$  & $-$ & $-$ \\
$x\slash y$  & $-$  & $-$ & $+$ \\
\hline
\end{tabular}
\end{tabular}\\

\begin{tabular}{c c c }
\begin{tabular}{|c| |c |c |c| }
\hline
$(L^{-1}_a, \varepsilon, R^{-1}_a)$  & $f$  & $e$ & $s$ \\
\hline
$xy$  & $-$  & $-$ & $-$ \\
$yx$  & $+$  & $-$ & $-$ \\
$x\backslash y$  & $-$  & $-$ & $-$ \\
$y\backslash x$  & $-$  & $-$ & $-$ \\
$y\slash x$  & $-$  & $-$ & $-$ \\
$x\slash y$  & $-$  & $-$ & $+$ \\
\hline
\end{tabular}
&
\begin{tabular}{|c| |c |c |c| }
\hline
$(L^{-1}_a, \varepsilon,  P_a)$  & $f$  & $e$ & $s$ \\
\hline
$xy$  & $-$  & $-$ & $-$ \\
$yx$  & $-$  & $-$ & $-$ \\
$x\backslash y$  & $-$  & $-$ & $-$ \\
$y\backslash x$  & $-$  & $-$ & $-$ \\
$y\slash x$  & $-$  & $-$ & $-$ \\
$x\slash y$  & $+$  & $-$ & $+$ \\
\hline
\end{tabular}
&
\begin{tabular}{|c| |c |c |c| }
\hline
$(L^{-1}_a, \varepsilon, P^{-1}_a)$  & $f$  & $e$ & $s$ \\
\hline
$xy$  & $-$  & $-$ & $-$ \\
$yx$  & $-$  & $-$ & $-$ \\
$x\backslash y$  & $-$  & $-$ & $-$ \\
$y\backslash x$  & $+$  & $-$ & $-$ \\
$y\slash x$  & $-$  & $-$ & $-$ \\
$x\slash y$  & $-$  & $-$ & $+$ \\
\hline
\end{tabular}
\end{tabular}\\

\begin{tabular}{c c c }
\begin{tabular}{|c| |c |c |c| }
\hline
$( \varepsilon, L^{-1}_a, L_a)$  & $f$  & $e$ & $s$ \\
\hline
$xy$  & $-$  & $-$ & $-$ \\
$yx$  & $-$  & $-$ & $-$ \\
$x\backslash y$  & $-$  & $-$ & $-$ \\
$y\backslash x$  & $-$  & $+$ & $-$ \\
$y\slash x$  & $-$  & $-$ & $+$ \\
$x\slash y$  & $-$  & $-$ & $-$ \\
\hline
\end{tabular}
&
\begin{tabular}{|c| |c |c |c| }
\hline
$( \varepsilon, L^{-1}_a,  L^{-1}_a)$  & $f$  & $e$ & $s$ \\
\hline
$xy$  & $-$  & $-$ & $-$ \\
$yx$  & $-$  & $+$ & $-$ \\
$x\backslash y$  & $-$  & $-$ & $-$ \\
$y\backslash x$  & $-$  & $-$ & $-$ \\
$y\slash x$  & $-$  & $-$ & $+$ \\
$x\slash y$  & $-$  & $-$ & $-$ \\
\hline
\end{tabular}
&
\begin{tabular}{|c| |c |c |c| }
\hline
$(\varepsilon, L^{-1}_a, R_a)$  & $f$  & $e$ & $s$ \\
\hline
$xy$  & $-$  & $-$ & $-$ \\
$yx$  & $-$  & $-$ & $-$ \\
$x\backslash y$  & $-$  & $-$ & $-$ \\
$y\backslash x$  & $-$  & $-$ & $-$ \\
$y\slash x$  & $-$  & $-$ & $+$ \\
$x\slash y$  & $-$  & $+$ & $-$ \\
\hline
\end{tabular}
\end{tabular}\\

\begin{tabular}{c c c }
\begin{tabular}{|c| |c |c |c| }
\hline
$(\varepsilon, L^{-1}_a, R^{-1}_a)$  & $f$  & $e$ & $s$ \\
\hline
$xy$  & $-$  & $+$ & $-$ \\
$yx$  & $-$  & $-$ & $-$ \\
$x\backslash y$  & $-$  & $-$ & $-$ \\
$y\backslash x$  & $-$  & $-$ & $-$ \\
$y\slash x$  & $-$  & $-$ & $+$ \\
$x\slash y$  & $-$  & $-$ & $-$ \\
\hline
\end{tabular}
&
\begin{tabular}{|c| |c |c |c| }
\hline
$( \varepsilon, L^{-1}_a,  P_a)$  & $f$  & $e$ & $s$ \\
\hline
$xy$  & $-$  & $-$ & $-$ \\
$yx$  & $-$  & $-$ & $-$ \\
$x\backslash y$  & $-$  & $-$ & $-$ \\
$y\backslash x$  & $-$  & $-$ & $-$ \\
$y\slash x$  & $-$  & $+$ & $+$ \\
$x\slash y$  & $-$  & $-$ & $-$ \\
\hline
\end{tabular}
&
\begin{tabular}{|c| |c |c |c| }
\hline
$( \varepsilon, L^{-1}_a,  P^{-1}_a)$  & $f$  & $e$ & $s$ \\
\hline
$xy$  & $-$  & $-$ & $-$ \\
$yx$  & $-$  & $-$ & $-$ \\
$x\backslash y$  & $-$  & $+$ & $-$ \\
$y\backslash x$  & $-$  & $-$ & $-$ \\
$y\slash x$  & $-$  & $-$ & $+$ \\
$x\slash y$  & $-$  & $-$ & $-$ \\
\hline
\end{tabular}
\end{tabular}

\end{center}
}
\end{table}

\begin{table}[hptb]
\label{TABLE_thre}
\large{
\begin{center}
\begin{tabular}{c c c }
\begin{tabular}{|c| |c |c |c| }
\hline
$(R_a,  L_a, \varepsilon)$  & $f$  & $e$ & $s$ \\
\hline
$xy$  & $+$  & $+$ & $-$ \\
$yx$  & $-$  & $-$ & $-$ \\
$x\backslash y$  & $-$  & $-$ & $-$ \\
$y\backslash x$  & $-$  & $-$ & $-$ \\
$y\slash x$  & $-$  & $-$ & $-$ \\
$x\slash y$  & $-$  & $-$ & $-$ \\
\hline
\end{tabular}
&
\begin{tabular}{|c| |c |c |c| }
\hline
$(R_a, L^{-1}_a, \varepsilon)$  & $f$  & $e$ & $s$ \\
\hline
$xy$  & $-$  & $+$ & $-$ \\
$yx$  & $-$  & $-$ & $-$ \\
$x\backslash y$  & $+$  & $-$ & $-$ \\
$y\backslash x$  & $-$  & $-$ & $-$ \\
$y\slash x$  & $-$  & $-$ & $-$ \\
$x\slash y$  & $-$  & $-$ & $-$ \\
\hline
\end{tabular}
&
\begin{tabular}{|c| |c |c |c| }
\hline
$(R_a, R_a, \varepsilon)$  & $f$  & $e$ & $s$ \\
\hline
$xy$  & $-$  & $+$ & $-$ \\
$yx$  & $+$  & $-$ & $-$ \\
$x\backslash y$  & $-$  & $-$ & $-$ \\
$y\backslash x$  & $-$  & $-$ & $-$ \\
$y\slash x$  & $-$  & $-$ & $-$ \\
$x\slash y$  & $-$  & $-$ & $-$ \\
\hline
\end{tabular}
\end{tabular}\\

\begin{tabular}{c c c }
\begin{tabular}{|c| |c |c |c| }
\hline
$(R_a, R^{-1}_a, \varepsilon)$  & $f$  & $e$ & $s$ \\
\hline
$xy$  & $-$  & $+$ & $-$ \\
$yx$  & $-$  & $-$ & $-$ \\
$x\backslash y$  & $-$  & $-$ & $-$ \\
$y\backslash x$  & $-$  & $-$ & $-$ \\
$y\slash x$  & $+$  & $-$ & $-$ \\
$x\slash y$  & $-$  & $-$ & $-$ \\
\hline
\end{tabular}
&
\begin{tabular}{|c| |c |c |c| }
\hline
$(R_a,  P_a, \varepsilon)$  & $f$  & $e$ & $s$ \\
\hline
$xy$  & $-$  & $+$ & $-$ \\
$yx$  & $-$  & $-$ & $-$ \\
$x\backslash y$  & $-$  & $-$ & $-$ \\
$y\backslash x$  & $+$  & $-$ & $-$ \\
$y\slash x$  & $-$  & $-$ & $-$ \\
$x\slash y$  & $-$  & $-$ & $-$ \\
\hline
\end{tabular}
&
\begin{tabular}{|c| |c |c |c| }
\hline
$(R_a, P^{-1}_a, \varepsilon)$  & $f$  & $e$ & $s$ \\
\hline
$xy$  & $-$  & $+$ & $-$ \\
$yx$  & $-$  & $-$ & $-$ \\
$x\backslash y$  & $-$  & $-$ & $-$ \\
$y\backslash x$  & $-$  & $-$ & $-$ \\
$y\slash x$  & $-$  & $-$ & $-$ \\
$x\slash y$  & $+$  & $-$ & $-$ \\
\hline
\end{tabular}
\end{tabular}\\

\begin{tabular}{c c c }
\begin{tabular}{|c| |c |c |c| }
\hline
$(R_a, \varepsilon,  L_a)$  & $f$  & $e$ & $s$ \\
\hline
$xy$  & $-$  & $-$ & $-$ \\
$yx$  & $-$  & $-$ & $-$ \\
$x\backslash y$  & $+$  & $-$ & $+$ \\
$y\backslash x$  & $-$  & $-$ & $-$ \\
$y\slash x$  & $-$  & $-$ & $-$ \\
$x\slash y$  & $-$  & $-$ & $-$ \\
\hline
\end{tabular}
&
\begin{tabular}{|c| |c |c |c| }
\hline
$(R_a, \varepsilon,   L^{-1}_a)$  & $f$  & $e$ & $s$ \\
\hline
$xy$  & $+$  & $-$ & $-$ \\
$yx$  & $-$  & $-$ & $-$ \\
$x\backslash y$  & $-$  & $-$ & $+$ \\
$y\backslash x$  & $-$  & $-$ & $-$ \\
$y\slash x$  & $-$  & $-$ & $-$ \\
$x\slash y$  & $-$  & $-$ & $-$ \\
\hline
\end{tabular}
&
\begin{tabular}{|c| |c |c |c| }
\hline
$(R_a, \varepsilon, R_a)$  & $f$  & $e$ & $s$ \\
\hline
$xy$  & $-$  & $-$ & $-$ \\
$yx$  & $-$  & $-$ & $-$ \\
$x\backslash y$  & $-$  & $-$ & $+$ \\
$y\backslash x$  & $-$  & $-$ & $-$ \\
$y\slash x$  & $+$  & $-$ & $-$ \\
$x\slash y$  & $-$  & $-$ & $-$ \\
\hline
\end{tabular}
\end{tabular}\\

\begin{tabular}{c c c }
\begin{tabular}{|c| |c |c |c| }
\hline
$(R_a, \varepsilon,  R^{-1}_a)$  & $f$  & $e$ & $s$ \\
\hline
$xy$  & $-$  & $-$ & $-$ \\
$yx$  & $+$  & $-$ & $-$ \\
$x\backslash y$  & $-$  & $-$ & $+$ \\
$y\backslash x$  & $-$  & $-$ & $-$ \\
$y\slash x$  & $-$  & $-$ & $-$ \\
$x\slash y$  & $-$  & $-$ & $-$ \\
\hline
\end{tabular}
&
\begin{tabular}{|c| |c |c |c| }
\hline
$(R_a, \varepsilon,   P_a)$  & $f$  & $e$ & $s$ \\
\hline
$xy$  & $-$  & $-$ & $-$ \\
$yx$  & $-$  & $-$ & $-$ \\
$x\backslash y$  & $-$  & $-$ & $+$ \\
$y\backslash x$  & $-$  & $-$ & $-$ \\
$y\slash x$  & $-$  & $-$ & $-$ \\
$x\slash y$  & $+$  & $-$ & $-$ \\
\hline
\end{tabular}
&
\begin{tabular}{|c| |c |c |c| }
\hline
$(R_a, \varepsilon,   P^{-1}_a)$  & $f$  & $e$ & $s$ \\
\hline
$xy$  & $-$  & $-$ & $-$ \\
$yx$  & $-$  & $-$ & $-$ \\
$x\backslash y$  & $-$  & $-$ & $+$ \\
$y\backslash x$  & $+$  & $-$ & $-$ \\
$y\slash x$  & $-$  & $-$ & $-$ \\
$x\slash y$  & $-$  & $-$ & $-$ \\
\hline
\end{tabular}
\end{tabular}

\end{center}
}
\end{table}

\newpage

\begin{table}[hptb]
\label{TABLE_thre}
\large{
\begin{center}
\begin{tabular}{c c c }
\begin{tabular}{|c| |c |c |c| }
\hline
$(\varepsilon, R_a, L_a)$  & $f$  & $e$ & $s$ \\
\hline
$xy$  & $-$  & $=$ & $-$ \\
$yx$  & $-$  & $-$ & $-$ \\
$x\backslash y$  & $-$  & $-$ & $-$ \\
$y\backslash x$  & $-$  & $+$ & $+$ \\
$y\slash x$  & $-$  & $-$ & $-$ \\
$x\slash y$  & $-$  & $-$ & $-$ \\
\hline
\end{tabular}
&
\begin{tabular}{|c| |c |c |c| }
\hline
$(\varepsilon, R_a, L^{-1}_a)$  & $f$  & $e$ & $s$ \\
\hline
$xy$  & $-$  & $-$ & $-$ \\
$yx$  & $-$  & $+$ & $-$ \\
$x\backslash y$  & $-$  & $-$ & $-$ \\
$y\backslash x$  & $-$  & $-$ & $+$ \\
$y\slash x$  & $-$  & $-$ & $-$ \\
$x\slash y$  & $-$  & $-$ & $-$ \\
\hline
\end{tabular}
&
\begin{tabular}{|c| |c |c |c| }
\hline
$(\varepsilon, R_a, R_a)$  & $f$  & $e$ & $s$ \\
\hline
$xy$  & $-$  & $-$ & $-$ \\
$yx$  & $-$  & $-$ & $-$ \\
$x\backslash y$  & $-$  & $-$ & $-$ \\
$y\backslash x$  & $-$  & $-$ & $+$ \\
$y\slash x$  & $-$  & $-$ & $-$ \\
$x\slash y$  & $-$  & $+$ & $-$ \\
\hline
\end{tabular}
\end{tabular}\\

\begin{tabular}{c c c }
\begin{tabular}{|c| |c |c |c| }
\hline
$(\varepsilon, R_a, R^{-1}_a)$  & $f$  & $e$ & $s$ \\
\hline
$xy$  & $-$  & $+$ & $-$ \\
$yx$  & $-$  & $-$ & $-$ \\
$x\backslash y$  & $-$  & $-$ & $-$ \\
$y\backslash x$  & $-$  & $-$ & $+$ \\
$y\slash x$  & $-$  & $-$ & $-$ \\
$x\slash y$  & $-$  & $-$ & $-$ \\
\hline
\end{tabular}
&
\begin{tabular}{|c| |c |c |c| }
\hline
$(\varepsilon, R_a, P_a)$  & $f$  & $e$ & $s$ \\
\hline
$xy$  & $-$  & $-$ & $-$ \\
$yx$  & $-$  & $-$ & $-$ \\
$x\backslash y$  & $-$  & $-$ & $-$ \\
$y\backslash x$  & $-$  & $-$ & $+$ \\
$y\slash x$  & $-$  & $+$ & $-$ \\
$x\slash y$  & $-$  & $-$ & $-$ \\
\hline
\end{tabular}
&
\begin{tabular}{|c| |c |c |c| }
\hline
$(\varepsilon, R_a, P^{-1}_a)$  & $f$  & $e$ & $s$ \\
\hline
$xy$  & $-$  & $-$ & $-$ \\
$yx$  & $-$  & $-$ & $-$ \\
$x\backslash y$  & $-$  & $+$ & $-$ \\
$y\backslash x$  & $-$  & $-$ & $+$ \\
$y\slash x$  & $-$  & $-$ & $-$ \\
$x\slash y$  & $-$  & $-$ & $-$ \\
\hline
\end{tabular}
\end{tabular}\\

\begin{tabular}{c c c }
\begin{tabular}{|c| |c |c |c| }
\hline
$(R^{-1}_a, L_a, \varepsilon)$  & $f$  & $e$ & $s$ \\
\hline
$xy$  & $+$  & $-$ & $-$ \\
$yx$  & $-$  & $-$ & $-$ \\
$x\backslash y$  & $-$  & $-$ & $-$ \\
$y\backslash x$  & $-$  & $-$ & $-$ \\
$y\slash x$  & $-$  & $-$ & $-$ \\
$x\slash y$  & $-$  & $+$ & $-$ \\
\hline
\end{tabular}
&
\begin{tabular}{|c| |c |c |c| }
\hline
$(R^{-1}_a,  L^{-1}_a, \varepsilon)$  & $f$  & $e$ & $s$ \\
\hline
$xy$  & $-$  & $-$ & $-$ \\
$yx$  & $-$  & $-$ & $-$ \\
$x\backslash y$  & $+$  & $-$ & $-$ \\
$y\backslash x$  & $-$  & $-$ & $-$ \\
$y\slash x$  & $-$  & $-$ & $-$ \\
$x\slash y$  & $-$  & $+$ & $-$ \\
\hline
\end{tabular}
&
\begin{tabular}{|c| |c |c |c| }
\hline
$(R^{-1}_a, R_a,  \varepsilon)$  & $f$  & $e$ & $s$ \\
\hline
$xy$  & $-$  & $-$ & $-$ \\
$yx$  & $+$  & $-$ & $-$ \\
$x\backslash y$  & $-$  & $-$ & $-$ \\
$y\backslash x$  & $-$  & $-$ & $-$ \\
$y\slash x$  & $-$  & $-$ & $-$ \\
$x\slash y$  & $-$  & $+$ & $-$ \\
\hline
\end{tabular}
\end{tabular}\\

\begin{tabular}{c c c }
\begin{tabular}{|c| |c |c |c| }
\hline
$(R^{-1}_a, R^{-1}_a, \varepsilon)$  & $f$  & $e$ & $s$ \\
\hline
$xy$  & $-$  & $-$ & $-$ \\
$yx$  & $-$  & $-$ & $-$ \\
$x\backslash y$  & $-$  & $-$ & $-$ \\
$y\backslash x$  & $-$  & $-$ & $-$ \\
$y\slash x$  & $+$  & $-$ & $-$ \\
$x\slash y$  & $-$  & $+$ & $-$ \\
\hline
\end{tabular}
&
\begin{tabular}{|c| |c |c |c| }
\hline
$(R^{-1}_a,  P_a, \varepsilon)$  & $f$  & $e$ & $s$ \\
\hline
$xy$  & $-$  & $-$ & $-$ \\
$yx$  & $-$  & $-$ & $-$ \\
$x\backslash y$  & $-$  & $-$ & $-$ \\
$y\backslash x$  & $+$  & $-$ & $-$ \\
$y\slash x$  & $-$  & $-$ & $-$ \\
$x\slash y$  & $-$  & $+$ & $-$ \\
\hline
\end{tabular}
&
\begin{tabular}{|c| |c |c |c| }
\hline
$(R^{-1}_a,  P^{-1}_a, \varepsilon)$  & $f$  & $e$ & $s$ \\
\hline
$xy$  & $-$  & $-$ & $-$ \\
$yx$  & $-$  & $-$ & $-$ \\
$x\backslash y$  & $-$  & $-$ & $-$ \\
$y\backslash x$  & $-$  & $-$ & $-$ \\
$y\slash x$  & $-$  & $-$ & $-$ \\
$x\slash y$  & $+$  & $+$ & $-$ \\
\hline
\end{tabular}
\end{tabular}

\end{center}
}
\end{table}

\begin{table}[hptb]
\label{TABLE_thre}
\large{
\begin{center}
\begin{tabular}{c c c }
\begin{tabular}{|c| |c |c |c| }
\hline
$(R^{-1}_a, \varepsilon,  L_a)$  & $f$  & $e$ & $s$ \\
\hline
$xy$  & $-$  & $-$ & $-$ \\
$yx$  & $-$  & $-$ & $-$ \\
$x\backslash y$  & $+$  & $-$ & $-$ \\
$y\backslash x$  & $-$  & $-$ & $+$ \\
$y\slash x$  & $-$  & $-$ & $-$ \\
$x\slash y$  & $-$  & $-$ & $-$ \\
\hline
\end{tabular}
&
\begin{tabular}{|c| |c |c |c| }
\hline
$(R^{-1}_a, \varepsilon, L^{-1}_a)$  & $f$  & $e$ & $s$ \\
\hline
$xy$  & $+$  & $-$ & $-$ \\
$yx$  & $-$  & $-$ & $-$ \\
$x\backslash y$  & $-$  & $-$ & $-$ \\
$y\backslash x$  & $-$  & $-$ & $+$ \\
$y\slash x$  & $-$  & $-$ & $-$ \\
$x\slash y$  & $-$  & $-$ & $-$ \\
\hline
\end{tabular}
&
\begin{tabular}{|c| |c |c |c| }
\hline
$(R^{-1}_a, \varepsilon, R_a)$  & $f$  & $e$ & $s$ \\
\hline
$xy$  & $-$  & $-$ & $-$ \\
$yx$  & $-$  & $-$ & $-$ \\
$x\backslash y$  & $-$  & $-$ & $-$ \\
$y\backslash x$  & $-$  & $-$ & $+$ \\
$y\slash x$  & $+$  & $-$ & $-$ \\
$x\slash y$  & $-$  & $-$ & $-$ \\
\hline
\end{tabular}
\end{tabular}\\

\begin{tabular}{c c c }
\begin{tabular}{|c| |c |c |c| }
\hline
$(R^{-1}_a, \varepsilon, R^{-1}_a)$  & $f$  & $e$ & $s$ \\
\hline
$xy$  & $-$  & $-$ & $-$ \\
$yx$  & $+$  & $-$ & $-$ \\
$x\backslash y$  & $-$  & $-$ & $-$ \\
$y\backslash x$  & $-$  & $-$ & $+$ \\
$y\slash x$  & $-$  & $-$ & $-$ \\
$x\slash y$  & $-$  & $-$ & $-$ \\
\hline
\end{tabular}
&
\begin{tabular}{|c| |c |c |c| }
\hline
$(R^{-1}_a, \varepsilon,  P_a)$  & $f$  & $e$ & $s$ \\
\hline
$xy$  & $-$  & $-$ & $-$ \\
$yx$  & $-$  & $-$ & $-$ \\
$x\backslash y$  & $-$  & $-$ & $-$ \\
$y\backslash x$  & $-$  & $-$ & $+$ \\
$y\slash x$  & $-$  & $-$ & $-$ \\
$x\slash y$  & $+$  & $-$ & $-$ \\
\hline
\end{tabular}
&
\begin{tabular}{|c| |c |c |c| }
\hline
$(R^{-1}_a, \varepsilon, P^{-1}_a)$  & $f$  & $e$ & $s$ \\
\hline
$xy$  & $-$  & $-$ & $-$ \\
$yx$  & $-$  & $-$ & $-$ \\
$x\backslash y$  & $-$  & $-$ & $-$ \\
$y\backslash x$  & $+$  & $-$ & $+$ \\
$y\slash x$  & $-$  & $-$ & $-$ \\
$x\slash y$  & $-$  & $-$ & $-$ \\
\hline
\end{tabular}
\end{tabular}\\

\begin{tabular}{c c c }
\begin{tabular}{|c| |c |c |c| }
\hline
$( \varepsilon, R^{-1}_a, L_a)$  & $f$  & $e$ & $s$ \\
\hline
$xy$  & $-$  & $-$ & $-$ \\
$yx$  & $-$  & $-$ & $-$ \\
$x\backslash y$  & $-$  & $-$ & $+$ \\
$y\backslash x$  & $-$  & $+$ & $-$ \\
$y\slash x$  & $-$  & $-$ & $-$ \\
$x\slash y$  & $-$  & $-$ & $-$ \\
\hline
\end{tabular}
&
\begin{tabular}{|c| |c |c |c| }
\hline
$( \varepsilon, R^{-1}_a,  L^{-1}_a)$  & $f$  & $e$ & $s$ \\
\hline
$xy$  & $-$  & $-$ & $-$ \\
$yx$  & $-$  & $+$ & $-$ \\
$x\backslash y$  & $-$  & $-$ & $+$ \\
$y\backslash x$  & $-$  & $-$ & $-$ \\
$y\slash x$  & $-$  & $-$ & $-$ \\
$x\slash y$  & $-$  & $-$ & $-$ \\
\hline
\end{tabular}
&
\begin{tabular}{|c| |c |c |c| }
\hline
$(\varepsilon, R^{-1}_a, R_a)$  & $f$  & $e$ & $s$ \\
\hline
$xy$  & $-$  & $-$ & $-$ \\
$yx$  & $-$  & $-$ & $-$ \\
$x\backslash y$  & $-$  & $-$ & $+$ \\
$y\backslash x$  & $-$  & $-$ & $-$ \\
$y\slash x$  & $-$  & $-$ & $-$ \\
$x\slash y$  & $-$  & $+$ & $-$ \\
\hline
\end{tabular}
\end{tabular}\\

\begin{tabular}{c c c }
\begin{tabular}{|c| |c |c |c| }
\hline
$(\varepsilon, R^{-1}_a, R^{-1}_a)$  & $f$  & $e$ & $s$ \\
\hline
$xy$  & $-$  & $+$ & $-$ \\
$yx$  & $-$  & $-$ & $-$ \\
$x\backslash y$  & $-$  & $-$ & $+$ \\
$y\backslash x$  & $-$  & $-$ & $-$ \\
$y\slash x$  & $-$  & $-$ & $-$ \\
$x\slash y$  & $-$  & $-$ & $-$ \\
\hline
\end{tabular}
&
\begin{tabular}{|c| |c |c |c| }
\hline
$( \varepsilon, R^{-1}_a,  P_a)$  & $f$  & $e$ & $s$ \\
\hline
$xy$  & $-$  & $-$ & $-$ \\
$yx$  & $-$  & $-$ & $-$ \\
$x\backslash y$  & $-$  & $-$ & $+$ \\
$y\backslash x$  & $-$  & $-$ & $-$ \\
$y\slash x$  & $-$  & $+$ & $-$ \\
$x\slash y$  & $-$  & $-$ & $-$ \\
\hline
\end{tabular}
&
\begin{tabular}{|c| |c |c |c| }
\hline
$( \varepsilon, R^{-1}_a,  P^{-1}_a)$  & $f$  & $e$ & $s$ \\
\hline
$xy$  & $-$  & $-$ & $-$ \\
$yx$  & $-$  & $-$ & $-$ \\
$x\backslash y$  & $-$  & $+$ & $+$ \\
$y\backslash x$  & $-$  & $-$ & $-$ \\
$y\slash x$  & $-$  & $-$ & $-$ \\
$x\slash y$  & $-$  & $-$ & $-$ \\
\hline
\end{tabular}
\end{tabular}

\end{center}
}
\end{table}

\begin{table}[hptb]
\label{TABLE_thre}
\large{
\begin{center}
\begin{tabular}{c c c }
\begin{tabular}{|c| |c |c |c| }
\hline
$(P_a,  L_a, \varepsilon)$  & $f$  & $e$ & $s$ \\
\hline
$xy$  & $+$  & $-$ & $-$ \\
$yx$  & $-$  & $-$ & $-$ \\
$x\backslash y$  & $-$  & $+$ & $-$ \\
$y\backslash x$  & $-$  & $-$ & $-$ \\
$y\slash x$  & $-$  & $-$ & $-$ \\
$x\slash y$  & $-$  & $-$ & $-$ \\
\hline
\end{tabular}
&
\begin{tabular}{|c| |c |c |c| }
\hline
$(P_a, L^{-1}_a, \varepsilon)$  & $f$  & $e$ & $s$ \\
\hline
$xy$  & $-$  & $-$ & $-$ \\
$yx$  & $-$  & $-$ & $-$ \\
$x\backslash y$  & $+$  & $+$ & $-$ \\
$y\backslash x$  & $-$  & $-$ & $-$ \\
$y\slash x$  & $-$  & $-$ & $-$ \\
$x\slash y$  & $-$  & $-$ & $-$ \\
\hline
\end{tabular}
&
\begin{tabular}{|c| |c |c |c| }
\hline
$(P_a, R_a, \varepsilon)$  & $f$  & $e$ & $s$ \\
\hline
$xy$  & $-$  & $-$ & $-$ \\
$yx$  & $+$  & $-$ & $-$ \\
$x\backslash y$  & $-$  & $+$ & $-$ \\
$y\backslash x$  & $-$  & $-$ & $-$ \\
$y\slash x$  & $-$  & $-$ & $-$ \\
$x\slash y$  & $-$  & $-$ & $-$ \\
\hline
\end{tabular}
\end{tabular}\\

\begin{tabular}{c c c }
\begin{tabular}{|c| |c |c |c| }
\hline
$(P_a, R^{-1}_a, \varepsilon)$  & $f$  & $e$ & $s$ \\
\hline
$xy$  & $-$  & $-$ & $-$ \\
$yx$  & $-$  & $-$ & $-$ \\
$x\backslash y$  & $-$  & $+$ & $-$ \\
$y\backslash x$  & $-$  & $-$ & $-$ \\
$y\slash x$  & $+$  & $-$ & $-$ \\
$x\slash y$  & $-$  & $-$ & $-$ \\
\hline
\end{tabular}
&
\begin{tabular}{|c| |c |c |c| }
\hline
$(P_a,  P_a, \varepsilon)$  & $f$  & $e$ & $s$ \\
\hline
$xy$  & $-$  & $-$ & $-$ \\
$yx$  & $-$  & $-$ & $-$ \\
$x\backslash y$  & $-$  & $+$ & $-$ \\
$y\backslash x$  & $+$  & $-$ & $-$ \\
$y\slash x$  & $-$  & $-$ & $-$ \\
$x\slash y$  & $-$  & $-$ & $-$ \\
\hline
\end{tabular}
&
\begin{tabular}{|c| |c |c |c| }
\hline
$(P_a, P^{-1}_a, \varepsilon)$  & $f$  & $e$ & $s$ \\
\hline
$xy$  & $-$  & $-$ & $-$ \\
$yx$  & $-$  & $-$ & $-$ \\
$x\backslash y$  & $-$  & $+$ & $-$ \\
$y\backslash x$  & $-$  & $-$ & $-$ \\
$y\slash x$  & $-$  & $-$ & $-$ \\
$x\slash y$  & $+$  & $-$ & $-$ \\
\hline
\end{tabular}
\end{tabular}\\

\begin{tabular}{c c c }
\begin{tabular}{|c| |c |c |c| }
\hline
$(P_a, \varepsilon,  L_a)$  & $f$  & $e$ & $s$ \\
\hline
$xy$  & $-$  & $-$ & $+$ \\
$yx$  & $-$  & $-$ & $-$ \\
$x\backslash y$  & $+$  & $-$ & $-$ \\
$y\backslash x$  & $-$  & $-$ & $-$ \\
$y\slash x$  & $-$  & $-$ & $-$ \\
$x\slash y$  & $-$  & $-$ & $-$ \\
\hline
\end{tabular}
&
\begin{tabular}{|c| |c |c |c| }
\hline
$(P_a, \varepsilon,   L^{-1}_a)$  & $f$  & $e$ & $s$ \\
\hline
$xy$  & $+$  & $-$ & $+$ \\
$yx$  & $-$  & $-$ & $-$ \\
$x\backslash y$  & $-$  & $-$ & $-$ \\
$y\backslash x$  & $-$  & $-$ & $-$ \\
$y\slash x$  & $-$  & $-$ & $-$ \\
$x\slash y$  & $-$  & $-$ & $-$ \\
\hline
\end{tabular}
&
\begin{tabular}{|c| |c |c |c| }
\hline
$(P_a, \varepsilon, R_a)$  & $f$  & $e$ & $s$ \\
\hline
$xy$  & $-$  & $-$ & $+$ \\
$yx$  & $-$  & $-$ & $-$ \\
$x\backslash y$  & $-$  & $-$ & $-$ \\
$y\backslash x$  & $-$  & $-$ & $-$ \\
$y\slash x$  & $+$  & $-$ & $-$ \\
$x\slash y$  & $-$  & $-$ & $-$ \\
\hline
\end{tabular}
\end{tabular}\\

\begin{tabular}{c c c }
\begin{tabular}{|c| |c |c |c| }
\hline
$(P_a, \varepsilon,  R^{-1}_a)$  & $f$  & $e$ & $s$ \\
\hline
$xy$  & $-$  & $-$ & $+$ \\
$yx$  & $+$  & $-$ & $-$ \\
$x\backslash y$  & $-$  & $-$ & $-$ \\
$y\backslash x$  & $-$  & $-$ & $-$ \\
$y\slash x$  & $-$  & $-$ & $-$ \\
$x\slash y$  & $-$  & $-$ & $-$ \\
\hline
\end{tabular}
&
\begin{tabular}{|c| |c |c |c| }
\hline
$(P_a, \varepsilon,   P_a)$  & $f$  & $e$ & $s$ \\
\hline
$xy$  & $-$  & $-$ & $+$ \\
$yx$  & $-$  & $-$ & $-$ \\
$x\backslash y$  & $-$  & $-$ & $-$ \\
$y\backslash x$  & $-$  & $-$ & $-$ \\
$y\slash x$  & $-$  & $-$ & $-$ \\
$x\slash y$  & $+$  & $-$ & $-$ \\
\hline
\end{tabular}
&
\begin{tabular}{|c| |c |c |c| }
\hline
$(P_a, \varepsilon,   P^{-1}_a)$  & $f$  & $e$ & $s$ \\
\hline
$xy$  & $-$  & $-$ & $+$ \\
$yx$  & $-$  & $-$ & $-$ \\
$x\backslash y$  & $-$  & $-$ & $-$ \\
$y\backslash x$  & $+$  & $-$ & $-$ \\
$y\slash x$  & $-$  & $-$ & $-$ \\
$x\slash y$  & $-$  & $-$ & $-$ \\
\hline
\end{tabular}
\end{tabular}

\end{center}
}
\end{table}

\newpage

\begin{table}[hptb]
\label{TABLE_thre}
\large{
\begin{center}
\begin{tabular}{c c c }
\begin{tabular}{|c| |c |c |c| }
\hline
$(\varepsilon, P_a, L_a)$  & $f$  & $e$ & $s$ \\
\hline
$xy$  & $-$  & $-$ & $-$ \\
$yx$  & $-$  & $-$ & $+$ \\
$x\backslash y$  & $-$  & $-$ & $-$ \\
$y\backslash x$  & $-$  & $+$ & $-$ \\
$y\slash x$  & $-$  & $-$ & $-$ \\
$x\slash y$  & $-$  & $-$ & $-$ \\
\hline
\end{tabular}
&
\begin{tabular}{|c| |c |c |c| }
\hline
$(\varepsilon, P_a, L^{-1}_a)$  & $f$  & $e$ & $s$ \\
\hline
$xy$  & $-$  & $-$ & $-$ \\
$yx$  & $-$  & $+$ & $+$ \\
$x\backslash y$  & $-$  & $-$ & $-$ \\
$y\backslash x$  & $-$  & $-$ & $-$ \\
$y\slash x$  & $-$  & $-$ & $-$ \\
$x\slash y$  & $-$  & $-$ & $-$ \\
\hline
\end{tabular}
&
\begin{tabular}{|c| |c |c |c| }
\hline
$(\varepsilon, P_a, R_a)$  & $f$  & $e$ & $s$ \\
\hline
$xy$  & $-$  & $-$ & $-$ \\
$yx$  & $-$  & $-$ & $+$ \\
$x\backslash y$  & $-$  & $-$ & $-$ \\
$y\backslash x$  & $-$  & $-$ & $-$ \\
$y\slash x$  & $-$  & $-$ & $-$ \\
$x\slash y$  & $-$  & $+$ & $-$ \\
\hline
\end{tabular}
\end{tabular}\\

\begin{tabular}{c c c }
\begin{tabular}{|c| |c |c |c| }
\hline
$(\varepsilon, P_a, R^{-1}_a)$  & $f$  & $e$ & $s$ \\
\hline
$xy$  & $-$  & $+$ & $-$ \\
$yx$  & $-$  & $-$ & $+$ \\
$x\backslash y$  & $-$  & $-$ & $-$ \\
$y\backslash x$  & $-$  & $-$ & $-$ \\
$y\slash x$  & $-$  & $-$ & $-$ \\
$x\slash y$  & $-$  & $-$ & $-$ \\
\hline
\end{tabular}
&
\begin{tabular}{|c| |c |c |c| }
\hline
$(\varepsilon, P_a, P_a)$  & $f$  & $e$ & $s$ \\
\hline
$xy$  & $-$  & $-$ & $-$ \\
$yx$  & $-$  & $-$ & $+$ \\
$x\backslash y$  & $-$  & $-$ & $-$ \\
$y\backslash x$  & $-$  & $-$ & $-$ \\
$y\slash x$  & $-$  & $+$ & $-$ \\
$x\slash y$  & $-$  & $-$ & $-$ \\
\hline
\end{tabular}
&
\begin{tabular}{|c| |c |c |c| }
\hline
$(\varepsilon, P_a, P^{-1}_a)$  & $f$  & $e$ & $s$ \\
\hline
$xy$  & $-$  & $-$ & $-$ \\
$yx$  & $-$  & $-$ & $+$ \\
$x\backslash y$  & $-$  & $+$ & $-$ \\
$y\backslash x$  & $-$  & $-$ & $-$ \\
$y\slash x$  & $-$  & $-$ & $-$ \\
$x\slash y$  & $-$  & $-$ & $-$ \\
\hline
\end{tabular}
\end{tabular}\\

\begin{tabular}{c c c }
\begin{tabular}{|c| |c |c |c| }
\hline
$(P^{-1}_a, L_a, \varepsilon)$  & $f$  & $e$ & $s$ \\
\hline
$xy$  & $+$  & $-$ & $-$ \\
$yx$  & $-$  & $-$ & $-$ \\
$x\backslash y$  & $-$  & $-$ & $-$ \\
$y\backslash x$  & $-$  & $-$ & $-$ \\
$y\slash x$  & $-$  & $+$ & $-$ \\
$x\slash y$  & $-$  & $-$ & $-$ \\
\hline
\end{tabular}
&
\begin{tabular}{|c| |c |c |c| }
\hline
$(P^{-1}_a,  L^{-1}_a, \varepsilon)$  & $f$  & $e$ & $s$ \\
\hline
$xy$  & $-$  & $-$ & $-$ \\
$yx$  & $-$  & $-$ & $-$ \\
$x\backslash y$  & $+$  & $-$ & $-$ \\
$y\backslash x$  & $-$  & $-$ & $-$ \\
$y\slash x$  & $-$  & $+$ & $-$ \\
$x\slash y$  & $-$  & $-$ & $-$ \\
\hline
\end{tabular}
&
\begin{tabular}{|c| |c |c |c| }
\hline
$(P^{-1}_a, R_a,  \varepsilon)$  & $f$  & $e$ & $s$ \\
\hline
$xy$  & $-$  & $-$ & $-$ \\
$yx$  & $+$  & $-$ & $-$ \\
$x\backslash y$  & $-$  & $-$ & $-$ \\
$y\backslash x$  & $-$  & $-$ & $-$ \\
$y\slash x$  & $-$  & $+$ & $-$ \\
$x\slash y$  & $-$  & $-$ & $-$ \\
\hline
\end{tabular}
\end{tabular}\\

\begin{tabular}{c c c }
\begin{tabular}{|c| |c |c |c| }
\hline
$(P^{-1}_a, R^{-1}_a, \varepsilon)$  & $f$  & $e$ & $s$ \\
\hline
$xy$  & $-$  & $-$ & $-$ \\
$yx$  & $-$  & $-$ & $-$ \\
$x\backslash y$  & $-$  & $-$ & $-$ \\
$y\backslash x$  & $-$  & $-$ & $-$ \\
$y\slash x$  & $+$  & $+$ & $-$ \\
$x\slash y$  & $-$  & $-$ & $-$ \\
\hline
\end{tabular}
&
\begin{tabular}{|c| |c |c |c| }
\hline
$(P^{-1}_a,  P_a, \varepsilon)$  & $f$  & $e$ & $s$ \\
\hline
$xy$  & $-$  & $-$ & $-$ \\
$yx$  & $-$  & $-$ & $-$ \\
$x\backslash y$  & $-$  & $-$ & $-$ \\
$y\backslash x$  & $+$  & $-$ & $-$ \\
$y\slash x$  & $-$  & $+$ & $-$ \\
$x\slash y$  & $-$  & $-$ & $-$ \\
\hline
\end{tabular}
&
\begin{tabular}{|c| |c |c |c| }
\hline
$(P^{-1}_a,  P^{-1}_a, \varepsilon)$  & $f$  & $e$ & $s$ \\
\hline
$xy$  & $-$  & $-$ & $-$ \\
$yx$  & $-$  & $-$ & $-$ \\
$x\backslash y$  & $-$  & $-$ & $-$ \\
$y\backslash x$  & $-$  & $-$ & $-$ \\
$y\slash x$  & $-$  & $+$ & $-$ \\
$x\slash y$  & $+$  & $-$ & $-$ \\
\hline
\end{tabular}
\end{tabular}

\end{center}
}
\end{table}

\newpage

\begin{table}[hptb]
\label{TABLE_thre}
\large{
\begin{center}
\begin{tabular}{c c c }
\begin{tabular}{|c| |c |c |c| }
\hline
$(P^{-1}_a, \varepsilon,  L_a)$  & $f$  & $e$ & $s$ \\
\hline
$xy$  & $-$  & $-$ & $-$ \\
$yx$  & $-$  & $-$ & $+$ \\
$x\backslash y$  & $+$  & $-$ & $-$ \\
$y\backslash x$  & $-$  & $-$ & $-$ \\
$y\slash x$  & $-$  & $-$ & $-$ \\
$x\slash y$  & $-$  & $-$ & $-$ \\
\hline
\end{tabular}
&
\begin{tabular}{|c| |c |c |c| }
\hline
$(P^{-1}_a, \varepsilon, L^{-1}_a)$  & $f$  & $e$ & $s$ \\
\hline
$xy$  & $+$  & $-$ & $-$ \\
$yx$  & $-$  & $-$ & $+$ \\
$x\backslash y$  & $-$  & $-$ & $-$ \\
$y\backslash x$  & $-$  & $-$ & $-$ \\
$y\slash x$  & $-$  & $-$ & $-$ \\
$x\slash y$  & $-$  & $-$ & $-$ \\
\hline
\end{tabular}
&
\begin{tabular}{|c| |c |c |c| }
\hline
$(P^{-1}_a, \varepsilon, R_a)$  & $f$  & $e$ & $s$ \\
\hline
$xy$  & $-$  & $-$ & $-$ \\
$yx$  & $-$  & $-$ & $+$ \\
$x\backslash y$  & $-$  & $-$ & $-$ \\
$y\backslash x$  & $-$  & $-$ & $-$ \\
$y\slash x$  & $+$  & $-$ & $-$ \\
$x\slash y$  & $-$  & $-$ & $-$ \\
\hline
\end{tabular}
\end{tabular}\\

\begin{tabular}{c c c }
\begin{tabular}{|c| |c |c |c| }
\hline
$(P^{-1}_a, \varepsilon, R^{-1}_a)$  & $f$  & $e$ & $s$ \\
\hline
$xy$  & $-$  & $-$ & $-$ \\
$yx$  & $+$  & $-$ & $+$ \\
$x\backslash y$  & $-$  & $-$ & $-$ \\
$y\backslash x$  & $-$  & $-$ & $-$ \\
$y\slash x$  & $-$  & $-$ & $-$ \\
$x\slash y$  & $-$  & $-$ & $-$ \\
\hline
\end{tabular}

\begin{tabular}{|c| |c |c |c| }
\hline
$(P^{-1}_a, \varepsilon,  P_a)$  & $f$  & $e$ & $s$ \\
\hline
$xy$  & $-$  & $-$ & $-$ \\
$yx$  & $-$  & $-$ & $+$ \\
$x\backslash y$  & $-$  & $-$ & $-$ \\
$y\backslash x$  & $-$  & $-$ & $-$ \\
$y\slash x$  & $-$  & $-$ & $-$ \\
$x\slash y$  & $+$  & $-$ & $-$ \\
\hline
\end{tabular}
&
\begin{tabular}{|c| |c |c |c| }
\hline
$(P^{-1}_a, \varepsilon, P^{-1}_a)$  & $f$  & $e$ & $s$ \\
\hline
$xy$  & $-$  & $-$ & $-$ \\
$yx$  & $-$  & $-$ & $+$ \\
$x\backslash y$  & $-$  & $-$ & $-$ \\
$y\backslash x$  & $+$  & $-$ & $-$ \\
$y\slash x$  & $-$  & $-$ & $-$ \\
$x\slash y$  & $-$  & $-$ & $-$ \\
\hline
\end{tabular}
\end{tabular}\\

\begin{tabular}{c c c }
\begin{tabular}{|c| |c |c |c| }
\hline
$( \varepsilon, P^{-1}_a, L_a)$  & $f$  & $e$ & $s$ \\
\hline
$xy$  & $-$  & $-$ & $+$ \\
$yx$  & $-$  & $-$ & $-$ \\
$x\backslash y$  & $-$  & $-$ & $-$ \\
$y\backslash x$  & $-$  & $+$ & $-$ \\
$y\slash x$  & $-$  & $-$ & $-$ \\
$x\slash y$  & $-$  & $-$ & $-$ \\
\hline
\end{tabular}
&
\begin{tabular}{|c| |c |c |c| }
\hline
$( \varepsilon, P^{-1}_a,  L^{-1}_a)$  & $f$  & $e$ & $s$ \\
\hline
$xy$  & $-$  & $-$ & $+$ \\
$yx$  & $-$  & $+$ & $-$ \\
$x\backslash y$  & $-$  & $-$ & $-$ \\
$y\backslash x$  & $-$  & $-$ & $-$ \\
$y\slash x$  & $-$  & $-$ & $-$ \\
$x\slash y$  & $-$  & $-$ & $-$ \\
\hline
\end{tabular}
&
\begin{tabular}{|c| |c |c |c| }
\hline
$(\varepsilon, P^{-1}_a, R_a)$  & $f$  & $e$ & $s$ \\
\hline
$xy$  & $-$  & $-$ & $+$ \\
$yx$  & $-$  & $-$ & $-$ \\
$x\backslash y$  & $-$  & $-$ & $-$ \\
$y\backslash x$  & $-$  & $-$ & $-$ \\
$y\slash x$  & $-$  & $-$ & $-$ \\
$x\slash y$  & $-$  & $+$ & $-$ \\
\hline
\end{tabular}
\end{tabular}\\

\begin{tabular}{c c c }
\begin{tabular}{|c| |c |c |c| }
\hline
$(\varepsilon, P^{-1}_a, R^{-1}_a)$  & $f$  & $e$ & $s$ \\
\hline
$xy$  & $-$  & $+$ & $+$ \\
$yx$  & $-$  & $-$ & $-$ \\
$x\backslash y$  & $-$  & $-$ & $-$ \\
$y\backslash x$  & $-$  & $-$ & $-$ \\
$y\slash x$  & $-$  & $-$ & $-$ \\
$x\slash y$  & $-$  & $-$ & $-$ \\
\hline
\end{tabular}
&
\begin{tabular}{|c| |c |c |c| }
\hline
$( \varepsilon, P^{-1}_a,  P_a)$  & $f$  & $e$ & $s$ \\
\hline
$xy$  & $-$  & $-$ & $+$ \\
$yx$  & $-$  & $-$ & $-$ \\
$x\backslash y$  & $-$  & $-$ & $-$ \\
$y\backslash x$  & $-$  & $-$ & $-$ \\
$y\slash x$  & $-$  & $+$ & $-$ \\
$x\slash y$  & $-$  & $-$ & $-$ \\
\hline
\end{tabular}
&
\begin{tabular}{|c| |c |c |c| }
\hline
$( \varepsilon, P^{-1}_a,  P^{-1}_a)$  & $f$  & $e$ & $s$ \\
\hline
$xy$  & $-$  & $-$ & $+$ \\
$yx$  & $-$  & $-$ & $-$ \\
$x\backslash y$  & $-$  & $+$ & $-$ \\
$y\backslash x$  & $-$  & $-$ & $-$ \\
$y\slash x$  & $-$  & $-$ & $-$ \\
$x\slash y$  & $-$  & $-$ & $-$ \\
\hline
\end{tabular}
\end{tabular}
\end{center}
}
\end{table}

{\bf Acknowledgement. } The authors thank  Professor Alexandar  Krapez for fruitful discussions.

\vspace{2mm}
\begin{parbox}{118mm}{\footnotesize  Grigorii Horosh$^{1}$, Nadeghda  Malyutina$^{2}$,  Alexandra Scerbacova$^{3}$,   Victor Shcherbacov$^{4}$
\vspace{3mm}

\noindent

$^{1}$Ph.D. Student //
Institute of Mathematics and Computer Science of Moldova
\noindent Email: grigorii.horos@math.md

\vspace{3mm}

\noindent

$^{2}$Senior Lecturer//Shevchenko Transnistria State University
\noindent
Email: 231003.bab.nadezhda@mail.ru

\vspace{3mm}

$^{3}$Ph.D. Student//
Skolkovo Institute of Science and Technology
\noindent Email: scerbik33@yandex.ru

\vspace{3mm}

\noindent
$^{4}$Principal Researcher//
Institute of Mathematics and Computer Science  of Moldovav
\noindent Email: victor.scerbacov@math.md
}
\end{parbox}


\begin{thebibliography}{10}

\bibitem{vdb00}
V.D. Belousov.
\newblock Derivative operations and associators in loops.
\newblock {\em Matem. sb.}, 45(87):51--70, 1958.
\newblock (in Russian).

\bibitem{vdb0}
V.D. Belousov.
\newblock Regular permutations in quasigroups.
\newblock {\em Uchenye zapiski Bel'tskogo pedinstituta}, 1:39--49, 1958.
\newblock (in Russian).

\bibitem{VD}
V.D. Belousov.
\newblock {\em Foundations of the Theory of Quasigroups and Loops}.
\newblock Nauka, Moscow, 1967.
\newblock (in Russian).

\bibitem{BELAS}
V.D. Belousov.
\newblock The group associated with a quasigroup.
\newblock {\em Mat. Issled.}, 4(3):21--39, 1969.
\newblock (in Russian).

\bibitem{2}
V.D. Belousov.
\newblock {\em \protect{$n$-Ary} Quasigroups}.
\newblock Stiintsa, Kishinev, 1971.
\newblock (in Russian).

\bibitem{1a}
V.D. Belousov.
\newblock {\em Elements of Quasigroup Theory: a Special Course}.
\newblock Kishinev State University Printing House, Kishinev, 1981.
\newblock (in Russian).

\bibitem{BIRKHOFF_1948}
G.~Birkhoff.
\newblock {\em Lattice Theory}, volume~25.
\newblock American Mathematical Society Colloquium Publications. Revised
  edition, New York, 1948.

\bibitem{BIRKHOFF_ENG}
G.~Birkhoff.
\newblock {\em Lattice Theory, Third edition}, volume XXV American Mathematical
  Society.
\newblock American Mathematical Society Colloquium Publications, Providence,
  R.I., 1967.

\bibitem{BIRKHOFF}
G.~Birkhoff.
\newblock {\em Lattice Theory}.
\newblock Nauka, Moscow, 1984.
\newblock (in Russian).

\bibitem{DUPLAK}
J.~Duplak.
\newblock A parastrophic equivalence in quasigroups.
\newblock {\em Quasigroups Relat. Syst.}, 7:7--14, 2000.

\bibitem{EVANS_49}
T.~Evans.
\newblock Homomorphisms of non-associative systems.
\newblock {\em J. London Math. Soc.}, 24:254--260, 1949.

\bibitem{Krapez_19}
A.~Krapez.
\newblock \protect{Weak Associativity and Quasigroup Units}.
\newblock {\em Publ.Inst. Math.}, 105:17--24, 2019.

\bibitem{Krapez_Scerb}
A.~Krapez and V.~A. Shcherbacov.
\newblock \protect{Quasigroups, Units and Belousov's Problem \# 18}.
\newblock {\em Armen. J. Math.}, 11(9):1--27, 2019.

\bibitem{MAC_CUNE_MACE}
W.~McCune.
\newblock {\em \protect{Mace 4}}.
\newblock University of New Mexico, www.cs.unm.edu/mccune/prover9/, 2007.

\bibitem{MAC_CUNE_PROV}
W.~McCune.
\newblock {\em \protect{Prover 9}}.
\newblock University of New Mexico, www.cs.unm.edu/mccune/prover9/, 2007.

\bibitem{MURD_39}
D.C. Murdoch.
\newblock Quasigroups which satisfy certain generalized associative laws.
\newblock {\em Amer. J. Math.}, 61:509--522, 1939.

\bibitem{HOP}
H.O. Pflugfelder.
\newblock {\em Quasigroups and Loops: Introduction}.
\newblock Heldermann Verlag, Berlin, 1990.

\bibitem{SCERB_03}
V.A. Shcherbacov.
\newblock Elements of quasigroup theory and some its applications in code
  theory, 2003.
\newblock links: www.karlin.mff.cuni.cz/drapal/speccurs.pdf;
  http://de.wikipedia.org/wiki/Quasigruppe.

\bibitem{SCERB_07}
V.A. Shcherbacov.
\newblock On definitions of groupoids closely connected with quasigroups.
\newblock {\em Bul. Acad. Stiinte Repub. Mold., Mat.}, (2):43--54, 2007.

\bibitem{2017_Scerb}
Victor Shcherbacov.
\newblock {\em Elements of Quasigroup Theory and Applications}.
\newblock CRC Press, Boca Raton, 2017.

\bibitem{JDH_2007}
J.D.H. Smith.
\newblock {\em An introduction to Quasigroups and Their Representation}.
\newblock Studies in Advanced Mathematics. Chapman and Hall/CRC, London, 2007.

\bibitem{STEIN}
Sh.~K. Stein.
\newblock On the foundations of quasigroups.
\newblock {\em Trans. Amer. Math. Soc.}, 85(1):228--256, 1957.

\bibitem{SUSHKEV_BOOK}
A.K. Suschkewitsch.
\newblock {\em The theory of generalized groups}.
\newblock DNTVU, Kiev, 1937.
\newblock (in Russian).

\end{thebibliography}
\end{document}